\DeclareMathAlphabet{\mathpzc}{OT1}{pzc}{m}{it}
\newtheorem{theorem}{Theorem}[section]
\newtheorem{lemma}[theorem]{Lemma}
\newtheorem{proposition}[theorem]{Proposition}
\newtheorem{corollary}[theorem]{Corollary}
\newtheorem{conjecture}[theorem]{Conjecture}
\newtheorem{definition}[theorem]{Definition}
\theoremstyle{remark}
\newtheorem{remark}[theorem]{Remark}
\newcommand{\bC}{\mathbb{C}}
\newcommand{\bN}{\mathbb{N}}
\newcommand{\bQ}{\mathbb{Q}}
\newcommand{\bP}{\mathbb{P}}
\newcommand{\bZ}{\mathbb{Z}}
\newcommand{\cO}{\mathcal{O}}
\newcommand{\Exc}{{\rm Exc}}
\newcommand{\Img}{{\rm Im}}
\newcommand{\Supp}{{\rm Supp}}
\newcommand{\Sing}{{\rm Sing}}
\def\<{\langle}
\def\>{\rangle}
\title{Generalized nonvanishing conjecture and Iitaka conjecture}
\author{Chi-Kang Chang}
\address{Department of Mathematics, National Taiwan University, Taipei, 106, Taiwan}
\email{d08221001@ntu.edu.tw}
\keywords{Iitaka Conjecture, Generalized Nonvanishing Conjecture, nef reduction map}
\subjclass[2010]{14D06,14E30}
\begin{document}
\maketitle
\begin{abstract}
In this article, we will prove the Generalized Nonvanishing Conjecture holds for threefolds with either $\kappa> 0$ or $q>0$. As a result, we can prove the Iitaka conjecture $C_{n,m}$ holds for $n=7$ if the source space has non-negative Kodaira dimension, if the general fibre has positive Kodaira dimension, or if the base space is not threefold with $\kappa=q=0$. In particular, $C^-_{n,m}$ holds if $n\leq 7$.
\end{abstract}

\section{Introduction}
Given an algebraic fibre space $f:X\rightarrow Y$ between (smooth) projective varieties over a field with characteristic 0, a important question in birational geometry is how to compare birational invariants of $X$, $Y$, and the general fibre $F$ of $f$. One of the most important birational invariants is the Kodaira dimension, denoted by $\kappa(X)$, which is the Iitaka dimension of the canonical divisor. About the Kodaira dimension, a well-known classical conjecture, named the Iitaka Conjecture, is formulated as follows:

\begin{conjecture}[Iitaka Conjecture]
Let $f:X\rightarrow Y$ be an algebraic fibre space between smooth projective varieties, and $F$ be a very general fibre of $f$. 
\begin{enumerate}
    \item $(C_{n,m})$ If $\dim X = n$ and $\dim Y= m$, then we have $$\kappa(X)\geq \kappa(F)+\kappa(Y);$$
    \item $(C^-_{n,m})$ If $\dim X = n$, $\dim Y= m$, and $\kappa(X)\geq 0$, then we have $$\kappa(X)\geq \kappa(F)+\kappa(Y).$$
\end{enumerate}
\end{conjecture}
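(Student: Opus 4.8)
The plan is to reduce the asserted cases of $C_{n,m}$ (hence of $C^-_{n,m}$) to one genuinely new ingredient — the Generalized Nonvanishing Conjecture for threefolds — and then to establish that ingredient whenever the threefold has $\kappa>0$ or $q>0$. One may assume $\kappa(F)\ge 0$ and $\kappa(Y)\ge 0$, since otherwise $C_{n,m}$ is trivial. Collect what the literature supplies: by Kawamata's additivity theorem $C_{n,m}$ holds whenever the general fibre $F$ has a good minimal model; since the minimal model program and abundance hold in dimension $\le 3$, and (by Lai's theorem) a fourfold of positive Kodaira dimension has a good minimal model, this settles $C_{n,m}$ whenever $\dim F\le 3$, or $\dim F=4$ and $\kappa(F)>0$ (in particular whenever $F$ is of general type); by Viehweg's theorem $C_{n,m}$ holds whenever $Y$ is of general type; $C_{n,1}$ is classical; and $C_{n,2}$ is known, via the canonical bundle formula together with the fact that Generalized Nonvanishing holds for klt surface pairs twisted by a nef divisor. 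Consequently, for $n\le 6$ every fibre space is already covered — if $m\le 2$ by the above, and if $m\ge 3$ because then $\dim F\le 3$ — and for $n=7$ the only case not thereby settled is $\dim Y=3$, $\dim F=4$, $\kappa(F)=0$ (with $\kappa(Y)\in\{0,1,2\}$, the case $\kappa(Y)=3$ being Viehweg's). This is the essential case.

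So fix $f:X\to Y$ with $\dim X=7$, $\dim Y=3$, $\kappa(F)=0$. Applying the Fujino--Mori canonical bundle formula to $f$ (licit since $\kappa(F)=0$) produces a klt pair $(Y,\Delta_Y)$ with $\Delta_Y\ge 0$, a nef $\bQ$-divisor $M_Y$ with $K_Y+\Delta_Y+M_Y$ pseudoeffective, and $\kappa(X)=\kappa(Y,K_Y+\Delta_Y+M_Y)$; hence it suffices to show $\kappa(Y,K_Y+\Delta_Y+M_Y)\ge\kappa(Y)$. Passing to the Iitaka fibration of $K_Y+\Delta_Y$, whose general fibre is a surface or curve unless $\kappa(Y,K_Y+\Delta_Y)=0$ (in which case it is $Y$ itself), and using the surface and curve cases of Generalized Nonvanishing to descend together with the bigness of $K_Y+\Delta_Y$ over its Iitaka base to ascend, one reduces $\kappa(Y,K_Y+\Delta_Y+M_Y)\ge\kappa(Y)$ to the statement: for a projective klt threefold pair $(Y,\Delta)$ and a nef $\bQ$-divisor $L$ with $K_Y+\Delta+L$ pseudoeffective, $\kappa(Y,K_Y+\Delta+L)\ge 0$. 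This is the Generalized Nonvanishing Conjecture in dimension three, and it is the crux.

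To prove it for a klt threefold pair $(Y,\Delta)$ with nef twist $L$, assuming $\kappa(Y)>0$ or $q(Y)>0$: run the $(K_Y+\Delta)$-MMP with scaling of $L$ and invoke abundance for threefolds to reach a model with $K_Y+\Delta$ semiample and $L$ still nef. Consider the nef reduction map $\varphi:Y\dashrightarrow T$ of $L$ (Bauer--Campana--Eckl--Kebekus--Peternell--Wisniewski): $L$ is numerically trivial on the general fibre and $n(L)=\dim T$. If $n(L)=3$ then $K_Y+\Delta+L$ is big, so $\kappa=3$; if $n(L)\le 2$, restrict to a general fibre $G$ of $\varphi$, of positive dimension, where $L|_G\equiv 0$, apply abundance in dimension $\le 2$ to produce a section of a multiple of $K_G+\Delta_G$, and lift it. The remaining configuration is closed by the hypothesis. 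If $\kappa(Y)>0$, take the Iitaka fibration $Y\dashrightarrow Z$ of $K_Y+\Delta$, which is then big over $Z$; its general fibre is a surface or curve, where Generalized Nonvanishing is known, and twisting the direct image over $Z$ by an ample class pulled back from $Z$ yields sections, so $\kappa(Y,K_Y+\Delta+L)\ge\dim Z\ge 0$. If instead $q(Y)>0$, use the Albanese morphism $a:Y\to\mathrm{Alb}(Y)$, whose image is positive-dimensional: when $Y$ has maximal Albanese dimension, conclude by generic vanishing and Chen--Hacon type arguments on pseudoeffective divisors; otherwise restrict to the positive-dimensional fibres of $a$, apply the known lower-dimensional cases, and propagate along $a(Y)$, which — being a subvariety of an abelian variety — is fibred over an abelian variety by Ueno's theorem, so that Cao--Paun's positivity of direct images over abelian varieties applies.

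The main obstacle is precisely this last point: Generalized Nonvanishing for threefolds is at heart a nonvanishing statement — one must upgrade ``$K_Y+\Delta+L$ pseudoeffective'' to ``$\kappa(Y,K_Y+\Delta+L)\ge 0$'' — and absent a nontrivial fibration on $Y$ there is no smaller dimension to descend to; the excluded case $\kappa(Y)=q(Y)=0$ is the full Nonvanishing Conjecture for threefolds twisted by a nef divisor, which is open. The rest is technical bookkeeping: keeping $L$, and the moduli divisor $M_Y$, nef through each step of the minimal model program, controlling the restrictions of the relevant divisors to the fibres of the nef reduction, Albanese, and Iitaka maps, and tracking the discriminant boundary. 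Granting Generalized Nonvanishing for threefolds, the stated cases of $C_{7,m}$ follow from the reductions above; the one configuration those reductions do not settle unconditionally is $Y$ a threefold with $\kappa(Y)=q(Y)=0$ and $\kappa(F)=0$, where, under the hypothesis $\kappa(X)\ge 0$, the canonical bundle formula gives $\kappa(X)=\kappa(Y,K_Y+\Delta_Y+M_Y)\ge 0=\kappa(F)+\kappa(Y)$. Finally $C^-_{n,m}$ for $n\le 7$ follows at once: for $n\le 6$ it is already classical, and for $n=7$ its hypothesis $\kappa(X)\ge 0$ puts it among the cases of $C_{7,m}$ just treated.
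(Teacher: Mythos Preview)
Your broad architecture matches the paper's---reduce $C_{7,m}$ to $m=3$, $\kappa(F)=0$, apply Fujino--Mori, and invoke Generalized Nonvanishing on the threefold base---but your sketch of the threefold Generalized Nonvanishing contains a genuine error and several unjustified steps. The assertion ``if $n(L)=3$ then $K_Y+\Delta+L$ is big, so $\kappa=3$'' is false: nef dimension can strictly exceed numerical dimension, so $n(L)=3$ does not force $L$ (let alone $K_Y+\Delta+L$) to be big; this is exactly the hard case, not the easy one. Your proposal to ``run the $(K_Y+\Delta)$-MMP with scaling of $L$'' and keep $L$ nef is also unjustified: a $(K_Y+\Delta)$-negative contraction need not be $L$-trivial, so nefness of $L$ is generally lost. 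And the Iitaka-fibration reduction you sketch for $\kappa(Y)>0$, together with the vague ``lift it'' for sections from nef-reduction fibres, is not an argument.

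The paper's route differs substantially at this point. For $\kappa(K_Y+\Delta)>0$ (indeed $\nu\ge 1$) it simply invokes Lazi\'c--Peternell \cite[Corollary~D]{LP}, which already settles threefold Generalized Nonvanishing in that range; no new work is done there. The paper's new content is the case $q(Y)>0$ with $n(L)=\dim Y$: it uses the Albanese map $\alpha$, shows $(K_Y+\Delta+tL)|_F$ is big on the general Albanese fibre $F$ (which has $\dim F\le 2$, handled via an $L$-trivial MMP on $F$ and \cite[Corollary~C]{LP}), and then applies Birkar--Chen \cite[Theorem~4.1]{BC} to conclude numerical $\bQ$-effectivity of $K_Y+\Delta+tL$. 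For $n(L)\le 2$ in any dimension $\le 5$, it descends $L$ numerically along the nef reduction of $L$ via \cite[Lemma~3.1]{LP}, applies Ambro's trichotomy \cite[Theorem~0.3]{A} on the surface base, and uses Hashizume's log Iitaka inequality \cite{H}. Finally, your claimed equality $\kappa(X)=\kappa(Y,K_Y+\Delta_Y+M_Y)$ and the passage back to $\kappa(X)\ge\kappa(Y)$ is more delicate than you indicate: the paper only obtains \emph{numerical} $\bQ$-effectivity of $K_{Y'}+B+jL$, and must then invoke \cite[Theorem~3.1]{CPT} to absorb the numerically trivial twist and compare actual section spaces. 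You also omit the further reduction via augmented irregularity $\tilde q(Y)$ and the case $q(F)>0$ (handled through good minimal models of $F$), which the paper uses to sharpen the residual unknown locus.
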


Obviously, $C_{n,m}^-$ is a weaker version of $C_{n,m}$. Although the general case of $C_{n,m}$ remains unresolved, many special cases have been proven:

\begin{itemize}
    \item $\dim Y \leq 2$ (\cite{K82}, \cite{C});
    \item $\dim X \leq 6$ (\cite{B});
    \item $Y$ is of general type (\cite[Theorem 1.7]{F17},  \cite[Corollary IV]{V});
    \item $F$ is of general type (\cite[Corollary 4.3.5]{F20});
    \item $F$ has a good minimal model (\cite{K85}, \cite{H}). In particular, if $\dim F\leq 3$ (\cite{Kaw2});
    \item $Y$ has maximal Albanese dimension (\cite{CP}, \cite{HPS});
    \item $f$ is a smooth fibration, and $F$ is either a curve or of general type (\cite{PS}).
\end{itemize}
Note that the case $Y$ has maximal Albanese dimension recovered the case $\dim Y =1$, as every smooth projective curve with non-negative Kodaira dimension has maximal Albanese dimension (cf. \cite[Section 4.5.1]{F20}). 

Moreover, there is another classical conjecture, which is called Ueno Conjecture K. This conjecture is closely related to the Iitaka conjecture, as Birkar mentioned in \cite{B}. The statement is expressed as follows:
\begin{conjecture}[Ueno Conjecture K]
Let $X$ be a smooth projective variety with $\kappa(X)=0$, and $\alpha:X\rightarrow A$ be the Albanese map of $X$. Then we have:
\begin{enumerate}
    \item $\alpha$ is surjective;
    \item $\kappa(F)=0$ for the general fibre $F$ of $f$;
    \item There is an \'etale cover $A'\rightarrow A$ such that $X \times_A A'$ is birational to $F \times A'$ over A.
\end{enumerate}
\end{conjecture}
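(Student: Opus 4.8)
Statement (1) is Kawamata's theorem on the Albanese map of a variety of Kodaira dimension zero, so the content lies in (2) and (3). These are open in general; since our applications only need $\dim X\le 3$, and the case $\dim X\le 2$ is the Enriques--Kodaira classification, I will assume $\dim X=3$. As $\kappa(X)=0$, abundance for threefolds \cite{Kaw2} produces a good minimal model of $X$, and passing to it changes neither $\kappa(X)$, nor the Albanese variety $A$ (a birational invariant, since terminal singularities are rational), nor the birational class of the Albanese fibration over $A$; so I may assume that $X$ is $\bQ$-factorial terminal with $K_X$ semiample, and then $\kappa(X)=0$ forces $mK_X\sim0$ for some $m>0$, in particular $K_X\equiv0$.

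The plan is to apply the Beauville--Bogomolov type decomposition for threefolds with numerically trivial canonical class: after a finite Galois cover $\pi:\widetilde X\to X$ that is \'etale in codimension one, $\widetilde X$ is either an abelian threefold, a product $E\times S$ of an elliptic curve with a K3 surface, or a Calabi--Yau threefold with $q(\widetilde X)=0$. In the last case $q(X)\le q(\widetilde X)=0$, so $A$ is a point and the statement is vacuous, with $F=X$ and $\kappa(X)=0$. In the other two cases $\widetilde X$ splits, over its own Albanese, as a product of an abelian variety with the fibre, and the composite $\widetilde X\to X\to A$ Stein-factors through a morphism $A'\to A$ which is a surjective homomorphism of abelian varieties with finite kernel, hence an isogeny, hence \'etale. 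Transporting the product structure of $\widetilde X$ through the Galois action, one finds that $X\times_A A'$ is birational over $A'$ to $F\times A'$, where the general fibre $F$ of $\alpha$ is a point, an elliptic curve, an abelian surface, or a K3 surface; this is (3), and $\kappa(F)=0$ in every case — directly, or by adjunction from $mK_X\sim0$ — which is (2).

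Equivalently, and this is essentially the mechanism behind the decomposition, one can run the canonical bundle formula $K_X=\alpha^*(K_A+L_f+B_f)$: since $K_X\equiv0\equiv K_A$ while the discriminant part $B_f$ is effective and the moduli part $L_f$ is semipositive, one gets $B_f=0$ and $L_f\equiv0$, forcing the moduli map of $\alpha$ to be constant, i.e. $\alpha$ isotrivial; as $X$ is projective, the resulting twisted form of $F\times A$ has class either in a finite group (the automorphisms fixing a relative polarization) or a torsion torsor class, hence is split by a finite \'etale base change.

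The crux, and the reason the argument halts at dimension three, is the positivity input: the semipositivity of the moduli part of the canonical bundle formula and its strict positivity for families with genuinely varying moduli (equivalently, the Beauville--Bogomolov decomposition together with abundance, both unavailable in higher dimension). Concretely, the hard point is to exclude a nonisotrivial family of K3 surfaces over an elliptic curve whose total space has numerically trivial canonical class; after that, only the routine but fiddly matching of the \'etale cover produced by the decomposition with an \'etale cover of $A$, and the descent of the product structure, remain.
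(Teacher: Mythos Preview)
This statement is presented in the paper as an open \emph{conjecture}, not a proved result; there is no proof in the paper to compare against. Immediately after stating it, the paper only records that part (1) is Kawamata's theorem \cite{K81} (in fact that $\alpha$ is an algebraic fibre space) and that part (2) follows from \cite{CP} and \cite{HPS}. Part (3) is left open and is not used anywhere in the paper.

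Your premise that ``our applications only need $\dim X\le 3$'' is also mistaken: the proofs of Theorem~\ref{basenvt} and Theorem~\ref{iitaka7} never invoke parts (2) or (3) of Ueno's Conjecture~K. The only input of this flavour is part (1), used together with \cite{L} and abundance in dimension three to conclude that a fibre $F$ with $\kappa(F)=0$ and $\dim F-q(F)\le 3$ admits a good minimal model. So you are supplying an argument the paper neither gives nor needs.

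As for your dimension-three sketch itself: the route through a minimal model and a Beauville--Bogomolov type decomposition is plausible, but the passage from a quasi-\'etale cover of the (possibly singular) minimal model to an honest \'etale cover $A'\to A$ of the Albanese, and from there to a birational product decomposition of $X\times_A A'$, is precisely the delicate step and is not justified by ``transporting the product structure through the Galois action.'' Your canonical-bundle-formula variant hides the same gap in its final clause about twisted forms being split by a finite \'etale base change. In any case, none of this is required for the paper's results.
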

The first part of Ueno Conjecture K is proved by Kawamata in \cite[Theorem 1]{K81}. Moreover, Kawamata proved $\alpha$ is an algebraic fibre space. Also, the work of \cite{CP} and \cite{HPS} implies the second part of the Ueno conjecture is true. 

On the other hand, in Birkar's proof of the Iitaka Conjecture for $\dim X \leq 6$, one of the key ingredients is the canonical bundle formula proved by Fujino and Mori in \cite[Theorem 4.5]{FM}. According to this formula, for an algebraic fibre space $f:X\rightarrow Y$ such that the general fibre has Kodaira dimension 0, there is a good "birational modification" $f':X'\rightarrow Y'$ of $f$ such that the moduli $\bQ$-divisor of $f'$ is nef, and the discriminant $\bQ$-divisor of $f'$ is effective and klt. 

Recently, two interesting conjectures have been introduced by Lazi\'c and Peternell in \cite{LP}, which are called the Generalized Nonvanishing Conjecture and the Generalized Abundance Conjecture. These conjectures are closely related to the generalized pair and the canonical bundle formula for algebraic fibre spaces:

\begin{conjecture}
[Generalized Nonvanishing Conjecture]Let $(X,\Delta)$ be a klt pair of a normal projective variety $X$ such that $K_X+\Delta$ is pseudo-effective. Let $L$ be a nef $\bQ$-divisor on $X$. Then for every rational number $t \geq 0$, the numerical class of the divisor $K_X + \Delta + tL$ belongs to the effective cone, that is, there exists an effective $\bQ$-Cartier divisor $D$ such that $K_X+\Delta+tL\equiv D$.
\end{conjecture}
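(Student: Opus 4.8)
This is the Generalized Nonvanishing Conjecture of Lazi\'c--Peternell \cite{LP}, which in full generality is open; so, rather than a complete proof, I describe the reduction strategy that pulls it back to the classical Nonvanishing Conjecture together with abundance-type inputs, and which becomes unconditional precisely in the cases treated in this article. Two preliminary reductions are harmless. If $K_X+\Delta$ is big, write $K_X+\Delta\sim_\bQ A+E$ with $A$ ample and $E\geq 0$; then $K_X+\Delta+tL\sim_\bQ(A+tL)+E$ with $A+tL$ ample, hence $\bQ$-linearly --- in particular numerically --- equivalent to an effective divisor. So one may assume $K_X+\Delta$ is not big. And the case $t=0$ is exactly the (ungeneralized) Nonvanishing Conjecture for the klt pair $(X,\Delta)$, so one may assume $t>0$. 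It is convenient to run the whole argument for generalized klt pairs, since the inductive step below naturally produces one.

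The heart of the plan is an induction on $\dim X$ organised by the \emph{nef reduction map} $f\colon X\dashrightarrow Z$ of $L$ --- the almost holomorphic fibration with $L|_F\equiv 0$ for a very general fibre $F$ and $L\cdot C>0$ for very general curves $C$ not contracted by $f$ --- with $n=\dim Z$ the nef dimension. If $n=0$, then $L\equiv 0$ and the assertion for $(X,\Delta,tL)$ collapses to Nonvanishing for $(X,\Delta)$. If $0<n<\dim X$, resolve so that $f$ is a morphism with $X$ smooth, run a relative $(K_X+\Delta)$-MMP over $Z$ to make $K_X+\Delta$ relatively semiample (this is where relative abundance, equivalently a relative good minimal model, enters), and apply the Fujino--Mori canonical bundle formula \cite{FM} on a suitable model to obtain $K_X+\Delta\sim_\bQ f^*(K_Z+B_Z+M_Z)$ with $B_Z\geq 0$ klt and $M_Z$ nef. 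Since $L$ is numerically trivial on a general fibre of $f$, a semipositivity and descent argument (a nef divisor numerically trivial on the general fibre of a fibration of smooth projective varieties descends, up to numerical equivalence and a vertical correction) supplies a nef $\bQ$-divisor $L_Z$ on $Z$ and an effective vertical $\bQ$-divisor $V$ with
$$K_X+\Delta+tL\sim_\bQ f^*\big(K_Z+B_Z'+M_Z+tL_Z\big)+V,$$
where $B_Z'\geq 0$ absorbs the non-effective vertical contribution. Applying the conjecture in dimension $n<\dim X$ to the generalized klt pair $(Z,B_Z'+M_Z)$ with nef class $L_Z$ --- uniformly in $t$, since neither $f$ nor $L_Z$ depends on $t$ --- yields an effective divisor numerically equivalent to $K_Z+B_Z'+M_Z+tL_Z$, whose pullback plus $V$ is numerically equivalent to $K_X+\Delta+tL$. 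Finally, if $n=\dim X$, then $L$ is numerically positive along every movable curve through a very general point; if moreover $\nu(L)=\dim X$ then $L$ is big and we are in the case already handled, and the remaining subcase $\nu(L)<\dim X=n$ is treated directly, via a $(K_X+\Delta)$-MMP with scaling of a small multiple of $L$.

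The genuine obstacle is exactly where current technology halts: the induction rests on (i) the classical Nonvanishing Conjecture, needed at the base of the induction (when $L\equiv 0$) and known only through dimension three, and (ii) the relative abundance statement needed before the canonical bundle formula can be invoked --- the same ingredient underlying Birkar's proof of the Iitaka conjecture for $\dim X\leq 6$. In the situations this article is about --- threefolds with $\kappa(X,K_X+\Delta)>0$, or with irregularity $q>0$ (where a nontrivial Albanese map supplies extra structure to drive the induction) --- both inputs hold, nonvanishing and abundance being theorems in dimensions $\leq 3$, so the scheme goes through unconditionally; beyond dimension three these ingredients (and also the maximal-nef-dimension subcase above) are themselves open, which is why the statement remains a conjecture in general.
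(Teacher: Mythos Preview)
The statement is a conjecture, and the paper does not prove it in general; it establishes only the special cases recorded in Theorem~\ref{nvc}. You correctly flag this, so the relevant comparison is between your reduction scheme and the paper's actual arguments for those cases.

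For small nef dimension your inductive plan via the canonical bundle formula is not what the paper does. The paper descends $L$ (not $K_X+\Delta$) to a divisor $D'$ on the nef-reduction base $Y'$ using \cite[Lemma~3.1]{LP}, and then, rather than running a relative MMP and invoking the Fujino--Mori formula, applies Hashizume's log Iitaka inequality \cite[Theorem~1.4]{H} together with Ambro's classification of nef surface divisors with maximal nef dimension \cite[Theorem~0.3]{A}. This sidesteps any need to make $K_X+\Delta$ relatively semiample and avoids passing to generalized pairs on the base; the price is that the argument is specific to $n(L)\le 2$ with fibres of dimension at most~$3$. Your scheme is more uniform in principle but leans on exactly the relative-abundance input you yourself identify as conjectural.

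The genuine gap in your proposal is the case $n(L)=\dim X$ with $\nu(L)<\dim X$. Saying it is ``treated directly, via a $(K_X+\Delta)$-MMP with scaling of a small multiple of $L$'' is not a method: scaling by $L$ gives no obvious termination with a numerically effective output, and nothing in the MMP distinguishes this subcase. The paper's mechanism here is entirely different and is precisely where the hypothesis $q(X)>0$ enters: one restricts to a general fibre $F$ of the Albanese map (of dimension $\le 2$ in the cases at hand), shows via a surface $L$-trivial MMP (Corollary~\ref{sfb}) that $K_F+\Delta_F+tL_F$ is \emph{big} for $t\gg 0$, and then invokes Birkar--Chen \cite[Theorem~4.1]{BC} to produce a numerically trivial twist from the abelian base making $K_X+\Delta+tL$ effective (Proposition~\ref{alb}). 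Your passing remark that the Albanese map ``supplies extra structure to drive the induction'' does not capture this: the Albanese fibration is not the nef reduction of $L$, and the induction you set up (organised solely by the nef reduction) never sees it.
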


\begin{conjecture}
[Generalized Abundance Conjecture]Let $(X,\Delta)$ be a klt pair of a normal projective variety $X$ such that $K_X+\Delta$ is pseudo-effective. Let $L$ be a nef Cartier divisor on $X$ such that $K_X+\Delta+L$ is nef. Then there is a semiample $\bQ$-Cartier divisor divisor $M$ satisfying $K_X + \Delta + L\equiv M$.
\end{conjecture}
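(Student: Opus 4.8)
The plan is to follow the standard two-step paradigm for abundance-type statements: reduce the semiampleness of $D := K_X+\Delta+L$ to the effectivity furnished by the Generalized Nonvanishing Conjecture together with the equality $\kappa(X,D) = \nu(X,D)$ of its Iitaka and numerical dimensions, and then conclude via the Iitaka fibration of $D$ and a canonical bundle formula for generalized pairs.

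First I would dispose of the two extreme values of $\nu := \nu(X,D)$ unconditionally. If $\nu = 0$, then the nef divisor $D$ is numerically trivial and $M = 0$ works. If $\nu = \dim X$, then $D$ is nef and big; choosing $r$ with $rD$ Cartier and writing $K_X+\Delta = D - L$, the divisor $2rD - (K_X+\Delta) = (2r-1)D + L$ is nef and big, so the Kawamata--Shokurov base-point-free theorem applied to the klt pair $(X,\Delta)$ and the Cartier divisor $rD$ shows that $rD$, hence $D$, is semiample; take $M = D$.

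The substantive range is $0 < \nu < \dim X$, and this is where both conjectural inputs enter. By the Generalized Nonvanishing Conjecture we may assume $\kappa(X,D) \geq 0$; granting the key equality $\kappa(X,D) = \nu(X,D) =: d$, so that $0 < d < \dim X$, the Iitaka fibration of $D$ is defined and, after a birational modification, becomes a morphism $g \colon X' \to Z$ with $\dim Z = d$. Because a nef divisor whose Iitaka and numerical dimensions agree is abundant, the pulled-back divisor is $\bQ$-linearly trivial on a very general fibre of $g$; the generalized version of the Fujino--Mori canonical bundle formula \cite{FM} then writes it, up to numerical equivalence, as $g^{*}$ of the log-canonical-plus-nef divisor of a generalized klt pair on $Z$, which has Iitaka dimension $\dim Z$ and is therefore nef and big. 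The base-point-free argument of the previous paragraph now applies on $Z$ and yields a semiample class; since its pullback to $X'$ is numerically equivalent to that of $D$, the associated contraction factors through the modification, producing the required semiample $M$ with $M \equiv D$.

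The main obstacle is exactly the granted input: the equality $\kappa(X, K_X+\Delta+L) = \nu(X, K_X+\Delta+L)$, equivalently the exclusion of Iitaka dimension $0$ with positive numerical dimension. This is the generalized-pair counterpart of the hardest part of the classical Abundance Conjecture, and it is not implied by Generalized Nonvanishing, which only yields $\kappa \geq 0$; here one must use the full force of that conjecture (effectivity of $K_X+\Delta+tL$ for every $t \geq 0$, not merely $t = 1$), run the minimal model program on $K_X+\Delta$ to reach a minimal model, analyse the nef reduction map of $L$ on it, and, in low dimensions, bring in the known abundance theorems for klt pairs, Ueno's Conjecture K, and the Fujino--Mori formula. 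A secondary difficulty is that the descent above leans on the generalized minimal model program and canonical bundle formula for generalized pairs, currently unconditional only in small dimension. Consequently, in the spirit of the Lazi\'c--Peternell programme \cite{LP}, the realistic outcome is a reduction of the Generalized Abundance Conjecture to the Generalized Nonvanishing Conjecture together with the minimal model program, unconditional in the dimensions where these inputs are available.
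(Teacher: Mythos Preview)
The statement you address is not proved in the paper: it is stated there as a conjecture (the Generalized Abundance Conjecture) and left open in general. The paper only records that Lazi\'c--Peternell \cite{LP} and Chaudhuri \cite{C23} have established it under restrictive hypotheses, and then turns to the separate Generalized Nonvanishing Conjecture, proving special cases of \emph{that}. There is therefore no proof in the paper to compare your proposal against.

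As for the proposal itself, you are explicit that it is a conditional reduction rather than a proof, and that is accurate. The extreme cases $\nu(D)=0$ and $\nu(D)=\dim X$ are indeed unconditional: for the latter your computation $2rD-(K_X+\Delta)=(2r-1)D+L$ is correct and the base-point-free theorem applies. In the range $0<\nu(D)<\dim X$, however, the argument rests on two genuinely open inputs: first Generalized Nonvanishing to secure $\kappa(D)\geq 0$, and then the equality $\kappa(D)=\nu(D)$, which you rightly single out as the main obstacle and which is \emph{not} a consequence of nonvanishing (nor of effectivity of $K_X+\Delta+tL$ for all $t\geq 0$). This two-step framework is exactly the programme of \cite{LP}; your outline is a faithful summary of their strategy rather than a new argument, and in particular it does not close the gap. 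One additional caution on the descent step: even granting $\kappa=\nu$, after the canonical bundle formula you obtain a generalized klt pair on the base and must still argue that its log canonical plus nef part is big on $Z$ and that numerical semiampleness descends from the resolution $X'$ back to $X$; the latter needs the rational-singularities trick as in \cite[Lemma~2.14]{LP}, which you do not mention.
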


In \cite[Corollary C and D]{LP}, Lazi\'c and Peternell proved these 2 conjectures when either $\dim X \leq 2$, or $\dim X =3$ with $\nu(K_X+\Delta)\geq 1$. For varieties of higher dimensions, Chaudhuri, in \cite{C23}, proved that the Generalized Abundance Conjecture holds if either the nef dimension (see Definition \ref{nr}) $n(K_X+\Delta+L)\leq 2$, or $n(K_X+\Delta+L)=3$ with $\kappa(K_X+\Delta)>0$. In this article, we will prove some other special cases of the Generalized Nonvanishing Conjecture. Here we will use the following notations:
\begin{itemize}
    \item The Generalized Nonvanishing conjecture in dimension $d$ with $\kappa(X,K_X+\Delta)=k$, $q(X)=q$, and nef divisor $L$ with nef dimension $n$ by $N_{d,k,q,n}$;
    \item The Generalized Nonvanishing conjecture in dimension $d$ with $\kappa(X,K_X+\Delta)=k$ and $q(X)=q$ by $N_{d,k,q}$ (that is, the Generalized Nonvanishing Conjecture for nef divisor of arbitrary nef dimension);
\end{itemize}    

We will prove the following special cases of the Generalized Nonvanishing Conjecture:

\begin{theorem}\label{nvc}
We have the following:
    \begin{enumerate}
        \item $N_{d,k,q,1}$ is true if $k\geq 0$;
        \item $N_{d,k,q,2}$ is true if $k\geq 0$ and $d\leq 5$;
        \item $N_{3,k,q,3}$ is true if $q> 0$;
        \item $N_{d,0,q,n}$ is true if $n=d$ and $q\geq d-2$.
    \end{enumerate}
\end{theorem}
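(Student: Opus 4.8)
My plan is to route all four parts through the nef reduction map $f\colon X\dashrightarrow Z$ of the nef divisor $L$, for which $\dim Z=n(L)$, $L$ is numerically trivial on a very general fibre of $f$, and, through a very general point, the irreducible curves meeting $L$ trivially are exactly those contracted by $f$. After a log resolution I may assume $f$ is a morphism and $(X,\Delta)$ is log smooth, replacing $K_X+\Delta$ and $L$ by their pullbacks; this changes nothing, since the assertion is numerical. In each case the idea is to use $f$ to cut the problem down to an instance of generalized nonvanishing in dimension $n(L)$ (or lower) that is either trivial or already known, and then to glue back by means of the Fujino--Mori canonical bundle formula \cite{FM}. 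Two degenerate situations are immediate and will be isolated at the outset: if $\kappa(X,K_X+\Delta)=\dim X$, or if $L$ is big and $t>0$, then $K_X+\Delta+tL$ is big, hence numerically equivalent to an effective divisor, because a pseudoeffective divisor plus a big divisor is big.

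\emph{Parts (1) and (2).} I will induct on $d=\dim X$. If $n(L)=0$ then $L\equiv 0$, so $K_X+\Delta+tL\equiv K_X+\Delta$, which is $\mathbb Q$-linearly equivalent to an effective divisor since $k\ge 0$; this is the base case. If $n(L)=1$, then $f$ is a fibration onto a smooth curve $B$ with $L$ numerically trivial on the fibres; as a numerical class on a curve is a multiple of a point, $L$ is numerically the pullback of an ample $\mathbb Q$-divisor on $B$ plus an $f$-vertical correction, and the claim reduces to generalized nonvanishing on $B$, which follows from $k\ge 0$. If $n(L)=2$ (so $d\le 5$), I restrict to a general fibre $F$ of $f\colon X\to Z$ over the surface $Z$, with $\dim F=d-2\le 3$ and $L|_F\equiv 0$. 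If $K_F+\Delta_F=(K_X+\Delta)|_F$ is not pseudoeffective, then $F$ is uniruled and a $(K_X+\Delta)$-MMP terminates with a Mori fibre space, which lowers $d$ and lets me induct. If $K_F+\Delta_F$ is pseudoeffective, then since abundance is known in dimension $\le 3$ it admits a good minimal model, so $(X,\Delta)$ has a relative good minimal model over $Z$; passing to the relative Iitaka fibration $X\dashrightarrow W$ over $Z$, whose general fibres have Kodaira dimension $0$, the canonical bundle formula \cite{FM} rewrites $K_X+\Delta$ (numerically, on a birational model) as the pullback of the log canonical class of a generalized klt pair on $W$ with nef moduli part, while $L$ descends to a nef class on $W$ because the fibres of $X\dashrightarrow W$ sit inside those of $f$; one then concludes by induction on dimension when $\dim W<d$, and, when $\dim W=d$, by combining bigness of $K_X+\Delta$ in the relative direction with the Iitaka conjecture for a base of dimension $\le 2$ \cite{K82} and generalized nonvanishing for surfaces \cite{LP}.

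\emph{Parts (3) and (4).} Here the nef reduction is birational, so after the reductions above it remains to handle $t=0$ (ordinary nonvanishing) and nef divisors $L$ that are not big. For (3), $X$ is a threefold with $q(X)>0$; I will compose with the Albanese map $\alpha\colon X\to A$ and split according to $\dim\alpha(X)\in\{1,2,3\}$, reducing by the Iitaka-type results quoted in the introduction either to the case of maximal Albanese dimension, where nonvanishing and its generalized form hold by generic-vanishing methods, or to a fibration over a curve or a surface, to which lower-dimensional results and the case of a base of dimension $\le 2$ apply. For (4), $\kappa(X,K_X+\Delta)=0$ and $q(X)\ge d-2$; I will invoke Kawamata's structure theory for varieties of Kodaira dimension zero with large irregularity \cite{K81}, by which $X$ becomes, after an \'etale cover and a birational modification, a product $A\times Z$ with $A$ abelian and $\dim Z\le 2$, so that an arbitrary nef $L$ is controlled by nef divisors on the abelian factor --- semiample up to numerical equivalence after an isogeny --- and by abundance on $Z$, yielding generalized nonvanishing.

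\emph{Main obstacle.} The hard part will be carrying $L$ through the reduction. Descending $K_X+\Delta$ along the nef reduction via the canonical bundle formula produces on the base not a klt pair but a \emph{generalized} klt pair whose moduli part is only known to be nef, so the inductive step really needs generalized nonvanishing for generalized pairs in lower dimension; I will have to check that this is provided by \cite{LP} for surfaces and by the earlier parts of the present theorem otherwise. Secondary difficulties are the uniruled fibre case, where I must ensure the output of the MMP remains compatible with the nef reduction, and, in part (4), arranging Kawamata's product decomposition compatibly with the given $L$.
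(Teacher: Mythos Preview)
Your approach to parts (1) and (2) via the nef reduction of $L$ is the right starting point and matches the paper, but from there the two arguments diverge. You propose to descend $K_X+\Delta$ to the base via relative good minimal models and the canonical bundle formula, landing on a generalized pair downstairs, and you correctly flag the resulting ``generalized pair obstacle'' as the hard step. The paper sidesteps this entirely: it descends only $L$ (not $K_X+\Delta$) to a nef $\bQ$-divisor $D'$ on the base $Y'$ using \cite[Lemma~3.1]{LP}, and then applies Hashizume's log Iitaka inequality \cite[Theorem~1.4]{H} directly on $X'$ to bound $\kappa(X',K_{X'}+\Delta'+E^-+f'^*M)$ from below by $\kappa(Y',M)$ for suitable $M$. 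For $\dim Y'=1$ this is immediate since $D'$ is ample; for $\dim Y'=2$ the paper invokes Ambro's trichotomy \cite[Theorem~0.3]{A} for nef divisors of maximal nef dimension on smooth surfaces to control $K_{Y'}+tD'$. Your detour through relative Iitaka fibrations is therefore unnecessary, and note too that since $k\geq 0$ the restriction $K_F+\Delta_F$ automatically has $\kappa\geq 0$, so your uniruled-fibre branch never arises.

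Your plan for part (4) contains a genuine gap. You invoke a product decomposition $X\sim A\times Z$ after an \'etale cover, attributing it to \cite{K81}; but what Kawamata proved there is only that the Albanese map is an algebraic fibre space when $\kappa(X)=0$. The product decomposition is part (3) of Ueno's Conjecture~K and remains open. Worse, here the hypothesis is $\kappa(K_X+\Delta)=0$ rather than $\kappa(X)=0$, so even the surjectivity of the Albanese map requires the klt extension of \cite{HPS} and \cite{CP} (the paper spells this out in Remark~\ref{hps2}). The paper's route for parts (3) and (4) is instead: use the Albanese map $\alpha$ and apply the Birkar--Chen theorem \cite[Theorem~4.1]{BC}, which reduces the numerical $\bQ$-effectivity of $K_X+\Delta+tL$ to showing that its restriction to a general Albanese fibre $F$ is \emph{big}. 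Since $\dim F\leq 2$ and $n(L|_F)=\dim F$ (because $n(L)$ is maximal), this bigness follows from an $L$-trivial $(K_F+\Delta_F)$-MMP combined with generalized abundance for surfaces \cite[Corollary~C]{LP}; see the paper's Corollary~\ref{sfb}. No product structure, no case split on $\dim\alpha(X)$, and no generic-vanishing input are needed.
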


The following corollary immediately follows from \cite[Corollary D]{LP} and Theorem \ref{nvc}:

\begin{corollary}\label{nvc3d}
    $N_{3,k,q,n}$ is true unless $k=0,q=0$, and $n=3$. In particular, $N_{3,k,q}$ is true if either $k>0$ or $q>0$. 
\end{corollary}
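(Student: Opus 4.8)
The plan is to assemble Theorem \ref{nvc} with \cite[Corollary D]{LP}, organizing the argument according to the value of $k=\kappa(X,K_X+\Delta)$ and of the nef dimension $n=n(L)$. So fix a klt pair $(X,\Delta)$ with $X$ a normal projective threefold and $K_X+\Delta$ pseudo-effective, a nef $\bQ$-divisor $L$ with $n(L)=n$, and a rational number $t\geq 0$; the task is to produce an effective $\bQ$-Cartier divisor $D$ with $K_X+\Delta+tL\equiv D$. Since $X$ is a threefold, the nonvanishing theorem in dimension three applied to the klt pair $(X,\Delta)$ gives $k\geq 0$, so $k\in\{0,1,2,3\}$, while of course $0\leq n\leq 3$.

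First I would dispose of the range $k\geq 1$. For any pseudo-effective $\bQ$-divisor one has $\kappa\leq\nu$, hence $\nu(K_X+\Delta)\geq k\geq 1$, and then \cite[Corollary D]{LP} (which covers $\dim X\leq 2$, as well as $\dim X=3$ with $\nu(K_X+\Delta)\geq 1$) yields the Generalized Nonvanishing Conjecture on $X$ directly, so $N_{3,k,q,n}$ holds for every $n$. This already settles every triple with $k\neq 0$, irrespective of $q$ and $n$.

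It remains to treat $k=0$, which I would split according to $n$. If $n=0$ then $L\equiv 0$, so $K_X+\Delta+tL\equiv K_X+\Delta$, and $\kappa(X,K_X+\Delta)=0$ produces an effective $\bQ$-divisor $\bQ$-linearly equivalent to $K_X+\Delta$. If $n=1$, apply Theorem \ref{nvc}(1), whose hypothesis $k\geq 0$ holds. If $n=2$, apply Theorem \ref{nvc}(2), whose hypothesis $d\leq 5$ is satisfied since $d=3$. If $n=3$, apply Theorem \ref{nvc}(3), which needs $q>0$. Combining with the previous paragraph, $N_{3,k,q,n}$ holds in all cases except $k=0$, $q=0$, $n=3$, which is the first assertion. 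For the ``in particular'' part: if $k>0$, the case $k\geq 1$ above gives $N_{3,k,q,n}$ for all $n$, hence $N_{3,k,q}$; if $q>0$, the only configuration not already covered above, namely $n=3$, is handled by Theorem \ref{nvc}(3), so $N_{3,k,q,n}$ holds for all $n$ and therefore $N_{3,k,q}$ holds.

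Since this is essentially a bookkeeping argument, no step is a genuine obstacle; the only points requiring care are to confirm that \cite[Corollary D]{LP} indeed applies for every nef dimension once $\nu(K_X+\Delta)\geq 1$ (so that the whole range $k\geq 1$ is settled regardless of $n$), and, dually, that the single remaining gap $k=0$, $q=0$, $n=3$ is exactly the configuration excluded by the hypothesis $q>0$ of Theorem \ref{nvc}(3).
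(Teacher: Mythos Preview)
Your proposal is correct and matches the paper's own approach: the paper states that the corollary ``immediately follows from \cite[Corollary~D]{LP} and Theorem~\ref{nvc},'' and your case split by $k$ and $n$ is exactly how these two inputs combine. The only minor organizational difference is that one could equally well use Theorem~\ref{nvc}(1)--(2) for all $n\leq 2$ regardless of $k$ (since $k\geq 0$ is automatic) and reserve \cite[Corollary~D]{LP} solely for $n=3$, $k\geq 1$, but this is a matter of taste and your arrangement is equally valid.
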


Using the Generalized Nonvanishing Conjecture, we can prove the following result concerning algebraic fiber spaces with general fibers having Kodaira dimension zero, through small modifications to the proof in \cite{B}:

\begin{theorem}\label{basenvt}
Let $f:X\rightarrow Y$ be an algebraic fibre space between smooth projective varieties with general fibre $F$. Assume $\kappa(F)=0$ and $N_{d,k,q}$ holds for $d=\dim Y$, $k=\kappa(Y)$, and $q=q(Y)$, then we have $\kappa(X)\geq \kappa(Y)$.
\end{theorem}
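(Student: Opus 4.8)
The plan is to run Birkar's proof of $C_{n,m}$ in the case $\kappa(F)=0$ from \cite{B}, feeding in $N_{d,k,q}$ in place of the low-dimensional nonvanishing and minimal model inputs that he uses directly.

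If $\kappa(Y)=-\infty$ there is nothing to prove, so assume $\kappa(Y)\geq 0$, i.e.\ that $K_Y$ is pseudo-effective. Since the general fibre has Kodaira dimension $0$, the Fujino--Mori canonical bundle formula \cite{FM} produces birational models $X'$ of $X$ and $Y'$ of $Y$, and an algebraic fibre space $f':X'\to Y'$, with $K_{X'}\sim_{\bQ}(f')^{*}(K_{Y'}+B_{Y'}+M_{Y'})$, where $(Y',B_{Y'})$ is klt, $B_{Y'}\geq 0$ is the discriminant part, and $M_{Y'}$ is the nef moduli part. Since $\kappa$, $q$ and dimension are birational invariants, $\kappa(X)=\kappa(Y',K_{Y'}+B_{Y'}+M_{Y'})$ while $\dim Y'=d$, $\kappa(Y')=k$, $q(Y')=q$. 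Renaming, the theorem reduces to the assertion: if $(Y,B)$ is klt with $K_Y$ pseudo-effective and $B\geq 0$, and $M$ is the nef moduli part, then $\kappa(Y,K_Y+B+M)\geq\kappa(Y)$.

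Write $D=K_Y+B+M$. As $B\geq 0$, the divisor $K_Y+B$ is pseudo-effective, so $N_{d,k,q}$ applies to the pair $(Y,B)$ with the nef divisor $M$: for every rational $t\geq 0$ the numerical class of $K_Y+B+tM$ lies in the effective cone. I would then note the numerical-dimension inequality $\nu(Y,D)\geq\nu(Y,K_Y)\geq\kappa(Y,K_Y)=\kappa(Y)$, since adding the nef divisor $M$ and then the effective divisor $B$ does not decrease the numerical dimension. If $\nu(Y,D)=d$ then $D$ is big and $\kappa(Y,D)=d\geq\kappa(Y)$. Otherwise $0\leq\nu(Y,D)<d$, and it remains to upgrade the numerical dimension to the Iitaka dimension, i.e.\ to show that $D$ is abundant: this follows by combining the generalized nonvanishing just obtained with the minimal model program and abundance for ordinary klt pairs in dimension $d$ and the generalized abundance theorem of \cite{LP}, running a minimal model program for the generalized pair $(Y,B+M)$ to reach a model on which $K+B+M$ is semiample, whence $\kappa(Y,D)=\nu(Y,D)\geq\kappa(Y)$. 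The extremal case $\nu(Y,D)=0$ forces $\kappa(Y)=0$, and is handled using the $\nu=0$ case of (generalized) abundance together with the good behaviour of the moduli part, which give $\kappa(Y,D)=0=\kappa(Y)$.

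I expect the main obstacle to be exactly this last step, the passage from numerical to $\bQ$-linear effectivity: $N_{d,k,q}$ only places $K_Y+B+M$ in the effective cone, whereas $\kappa(Y,K_Y+B+M)\geq\kappa(Y)$ requires an honest $\bQ$-linearly effective divisor, so one has to supplement generalized nonvanishing with the minimal model program, abundance for klt pairs, and the good (e.g.\ semiample) behaviour of the moduli part in the dimensions that occur. Checking that Birkar's argument in \cite{B} uses exactly this package of inputs, with $N_{d,k,q}$ substituting for the nonvanishing statements quoted there, is the content of the ``small modifications''.
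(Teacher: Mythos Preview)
Your proposal has a genuine gap at exactly the point you flag yourself: the passage from numerical effectivity to Iitaka dimension. You propose to close it by ``combining the generalized nonvanishing just obtained with the minimal model program and abundance for ordinary klt pairs in dimension $d$ and the generalized abundance theorem of \cite{LP}.'' But generalized abundance is a conjecture, and \cite{LP} only proves it in dimension $\leq 2$ or in dimension $3$ with $\nu(K_X+\Delta)\geq 1$; likewise, abundance for klt pairs is not known in arbitrary dimension $d$. Since Theorem~\ref{basenvt} is stated for any $d$ (assuming only $N_{d,k,q}$), you are invoking results far stronger than the hypotheses allow. Your argument would, for instance, need to prove abundance on the base $Y'$ for the divisor $K_{Y'}+B+M$, which is essentially the Generalized Abundance Conjecture in dimension $d$.

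The paper bypasses this entirely. It does \emph{not} attempt to show $\kappa(Y',K_{Y'}+B+L)\geq\kappa(Y)$. Instead, after using $N_{d,k,q}$ to produce an effective $D_j\equiv K_{Y'}+B+jL$ on the base, it pulls the numerically trivial difference $D_j-(K_{Y'}+B+jL)$ back to $X'$ and pushes it forward to a numerically trivial divisor $M$ on $X$. The crucial trick is then the Campana--Peternell--Toma theorem \cite[Theorem 3.1]{CPT}, which gives $H^0(mK_X)\geq H^0(mK_X+pM)$ for numerically trivial $M$; this is a special property of the canonical bundle and has no analogue for arbitrary divisors. A chain of inequalities then yields $H^0(ijpK_X)\geq H^0(i(j-1)pK_{Y'})$ directly, whence $\kappa(X)\geq\kappa(Y)$. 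So the ``small modification'' of Birkar's proof is not a substitution of one abundance input for another, but the observation that $N_{d,k,q}$ supplies exactly the numerically-effective representative that Birkar's original argument (already built around \cite{CPT}) needs on the base.
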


In particular, $C_{n,3}$ holds if $\kappa(F)=0$ and either $\kappa(Y)\geq 1$, or $\kappa(Y)=0$ but $q(Y)>0$. As a corollary, we have the following:

\begin{theorem}\label{iitaka7}
Let $f:X\rightarrow Y$ be an algebraic fibre space between smooth projective varieties, and $F$ be a general fibre of $f$. Let $n=\dim X$ and $m=\dim Y$. If $n \leq 7$, then $C_{n,m}$ holds unless $n=7$, $m=3$, $\kappa(F)=\kappa(Y)=0$, and $q(X)=q(F)=q(Y)=\tilde{q}(Y)=0$ (here $ \tilde{q}$ is the augmented irregularity, see Definition \ref{aug}). In particular, $C_{n,m}^-$ holds if $n \leq 7$.
\end{theorem}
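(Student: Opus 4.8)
The plan is to go through the cases by $m=\dim Y$, clearing every $m\neq 3$ by known results and then dissecting $m=3$, $\dim F=4$ according to $\kappa(F)$ and the irregularities. First, if $\kappa(F)=-\infty$ or $\kappa(Y)=-\infty$ the right side is $-\infty$ and there is nothing to prove, so I assume $\kappa(F),\kappa(Y)\ge 0$; by \cite{B} I may assume $n=7$. For $m\le 2$, $C_{7,m}$ is \cite{K82}, \cite{C}; for $m=7$, $f$ is birational and the claim is trivial; for $4\le m\le 6$ one has $\dim F=7-m\le 3$, so $F$ has a good minimal model by \cite{Kaw2} and $C_{7,m}$ follows from \cite{K85}, \cite{H}. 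Thus only $m=3$, $\dim F=4$ remains.

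Next I split $m=3$ by $\kappa(F)$. If $\kappa(F)=4$, then $F$ is of general type and $C_{7,3}$ is \cite[Corollary 4.3.5]{F20}. If $0<\kappa(F)\le 3$, I pass to the relative Iitaka fibration $X'\xrightarrow{g}W\xrightarrow{h}Y$ of $K_{X/Y}$: the general fibre of $g$ has Kodaira dimension $0$ and dimension $4-\kappa(F)\le 3$, so by the Fujino--Mori canonical bundle formula \cite[Theorem 4.5]{FM} there exist, after a birational modification, a klt pair $(W,B_W)$, a nef $\bQ$-divisor $M_W$, and an effective $E$ with $K_{X'}\sim_\bQ g^{*}(K_W+B_W+M_W)+E$; and since $g$ restricts over general $y\in Y$ to the Iitaka fibration of $F$, the divisor $(K_W+B_W+M_W)|_{W_y}$ is \emph{big}, of Iitaka dimension $\dim W_y=\kappa(F)$. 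As $\dim W=3+\kappa(F)\le 6$, subadditivity of Kodaira dimension for the generalized pair $(W,B_W,M_W)$ over the threefold $Y$ in this relatively big situation---coming from Viehweg-type weak positivity of $h_{*}\mathcal{O}_W(m(K_{W/Y}+B_W+M_W))$ and, crucially, requiring \emph{no} nonvanishing input because relative bigness already furnishes the needed sections---gives $\kappa(X)\ge\kappa(W,K_W+B_W+M_W)\ge\dim W_y+\kappa(Y)=\kappa(F)+\kappa(Y)$.

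There remains $\kappa(F)=0$, where I must show $\kappa(X)\ge\kappa(Y)$; this is exactly Theorem \ref{basenvt} with $d=3$, $k=\kappa(Y)$, $q=q(Y)$, so by Corollary \ref{nvc3d} it holds whenever $\kappa(Y)>0$ or $q(Y)>0$. If $\kappa(Y)=q(Y)=0$ but $\tilde q(Y)>0$, I choose a finite \'etale cover $Y'\to Y$ with $q(Y')>0$ and apply the previous case to $X\times_Y Y'\to Y'$, which is \'etale over $X\to Y$, hence again a fibre space of smooth varieties with the same general fibre $F$ and with $\kappa$ of the base still $0$ but positive irregularity; then $\kappa(X)=\kappa(X\times_Y Y')\ge 0$ by \'etale invariance of $\kappa$. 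If $\kappa(Y)=q(Y)=\tilde q(Y)=0$ but $q(F)>0$, I pass to the fibrewise Albanese $X'\xrightarrow{g}Z\xrightarrow{h}Y$, with $X'\dashrightarrow X$ birational and $g$ restricting over general $y$ to the Albanese map of $F$: by Kawamata's theorem \cite[Theorem 1]{K81} (using $\kappa(F)=0$) the general fibre of $h$ is an abelian variety of dimension $q(F)>0$, while by \cite{CP}, \cite{HPS} the general fibre $F'$ of $g$ has $\kappa(F')=0$ and $\dim F'\le 3$; as $F'$ (by \cite{Kaw2}) and the abelian fibre of $h$ both have good minimal models, \cite{K85}, \cite{H} give $\kappa(X)\ge\kappa(Z)\ge\kappa(Y)$. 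Finally $q(Y)=q(F)=0$ forces $q(X)=0$, since the Albanese map of $X$ is then constant on general fibres and so factors through $Y$, whose Albanese vanishes; hence the only situation left uncovered is $n=7$, $m=3$, $\kappa(F)=\kappa(Y)=0$, $q(X)=q(F)=q(Y)=\tilde q(Y)=0$. In that residual case $C^{-}$ is automatic, its hypothesis $\kappa(X)\ge 0$ already giving $\kappa(X)\ge 0=\kappa(F)+\kappa(Y)$; together with \cite{B} this yields $C^{-}_{n,m}$ for all $n\le 7$.

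I expect the main obstacle to be the range $0<\kappa(F)<4$: the relative Iitaka base $W$ need \emph{not} have $W_y$ of general type (it may, e.g., be an abelian variety carrying a big moduli divisor), so three naive applications of $C_{n,m}$ do not close the argument and one is forced into the generalized-pair framework; the delicate points are to quote the generalized-pair weak-positivity/subadditivity statement in the right form and to make precise that, because the relevant divisor is big over $Y$, this step sidesteps the Generalized Nonvanishing Conjecture---exactly the ingredient that would otherwise be missing over a threefold with $\kappa=q=0$.
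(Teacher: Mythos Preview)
Your case analysis for $m\neq 3$, and within $m=3$ the subcases $\kappa(F)=0$ with $\kappa(Y)>0$, $q(Y)>0$, or $\tilde q(Y)>0$, all match the paper's proof; the deduction $q(X)=0$ from $q(F)=q(Y)=0$ is the content of \cite[Theorem 1.6]{F05}, which the paper cites, and your $C^-$ conclusion is correct.

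The genuine gap is the range $0<\kappa(F)\le 3$. You reduce to a generalized pair $(W,B_W,M_W)$ fibred over $Y$ with $(K_W+B_W+M_W)|_{W_y}$ big, and then invoke ``subadditivity of Kodaira dimension for the generalized pair \ldots\ coming from Viehweg-type weak positivity of $h_*\cO_W(m(K_{W/Y}+B_W+M_W))$''. No such result is cited, and none in the required generality is available: the moduli part $M_W$ is only nef, so the standard log-Iitaka theorems for klt pairs do not apply, and weak positivity for generalized pairs with a free-floating nef part is not established in the literature. You yourself flag this as ``the main obstacle'', and it is not resolved. The paper bypasses the issue with a one-line observation: since $\dim F-\kappa(F)\le 3$, the Iitaka fibration of $F$ has general fibre of dimension $\le 3$ and Kodaira dimension $0$, hence admitting a good minimal model by \cite{Kaw2}; then Lai's theorem \cite[Theorems 4.4, 4.5]{L} shows that $F$ itself has a good minimal model, and $C_{n,m}$ follows directly from \cite{K85}. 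No relative Iitaka fibration, no generalized pairs, and no weak-positivity input are needed.

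The same idea streamlines your $q(F)>0$ subcase: with $\kappa(F)=0$ and $q(F)\ge 1$, the Albanese map $F\to A(F)$ is a fibre space by \cite[Theorem 1]{K81}, with general fibre of dimension $4-q(F)\le 3$ and Kodaira dimension $0$ (by \cite{CP}, \cite{HPS}); so again Lai's theorem gives $F$ a good minimal model and \cite{K85} finishes. This avoids having to construct a global ``fibrewise Albanese'' $X'\to Z\to Y$, which is not an entirely trivial construction to set up rigorously.
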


The structure of this article is as follows: Section 2 is the preliminary section. In Section 3, we will explore the Generalized Nonvanishing Conjecture for nef divisors $L$ with small nef dimension, by using the nef reduction map of $L$. Following that, in Section 4, we will discuss the Generalized Nonvanishing Conjecture for nef divisors $L$ with maximal nef dimension, employing the Albanese map and the Generalized Abundance in lower dimensions. Theorem \ref{nvc} immediately follows from the results in these two sections. Finally, in Section 5, we will prove Theorem \ref{basenvt} and use it to prove Theorem \ref{iitaka7}.\\

\textbf{Acknowledgments:} Part of this work is done when the author visits the University of Tokyo. The author would like to thank Jungkai Alfred Chen and Yoshinori Gongyo for discussing this question with him and for their helpful advice and encouragement. The author also would like to thank Ching-Jui Lai, Keiji Oguiso, and Mihnea Popa for answering his questions. The author would like  to thank Iacopo Brivio, Jheng-Jie Chen, and Hsueh-Yung Lin for their helpful suggestions and encouragement. The author is supported by the PhD student’s scholarship of National Taiwan University, and by the scholarship of National Science and Technology Council for PhD students to study abroad. The author would like to thank the University of Tokyo for the warm hospitality during his visit.

\section{Preliminaries}
In this article, we work over $\bC$. An {\em algebraic fibre spece} is a surjective projective morphism $f:X\rightarrow Y$ with connected fibre. Over fields with characteristic 0, this condition is equivalent to $f_*\cO_X=\cO_Y$. A $\bQ$-divisor $D$ is called $\bQ$-Cartier if there is a positive integer $m$ such that $mD$ is a Cartier divisor. A variety $X$ is called {\em $\bQ$-factorial} if every divisor on $X$ is $\bQ$-Cartier. A log resolution of a pair $(X,\Delta)$ of normal projective variety is a resolution of singularity $\pi:X'\rightarrow X$ such that $\pi^{-1}(\Sing X\cup \Supp\Delta)$ is a simple normal crossing divisor. A pair $(X,\Delta)$ is called {\em smooth} if $X$ is smooth, and $\Supp\Delta$ is simple normal crossing. Also, through this article we will use the following definitions:
\begin{definition}
    Let $D$ be a $\bQ$-Cartier $\bQ$-divisor. We say $D$ is $\bQ$-effective (resp. numerically $\bQ$-effective, numerically semiample) if there is a positive integer $m$ such that $mD$ is linear equivalent to an effective divisor (resp. numerically equivalent to an effective divisor,  numerically equivalent to a base point free  divisor)
\end{definition}

In the following of this section, we will give a brief review of the definition and basic properties of Kodaira dimension, numerical dimension, nef reduction, nef dimension, and abundant divisors. Further details on these topics can be found in \cite{A}, \cite{BCE}, \cite{BP}, \cite{Laz}, \cite{N}, and \cite{T}. 

\begin{definition}
    Let $X$ be a projective variety, and $D$ be a $\bQ$-effective $\bQ$-Cartier $\bQ$-divisor on $X$. The Iitaka dimension of $D$, denoted by $\kappa(X,D)$ (or by $\kappa(D)$ if there is no ambiguity), is defined by: 
    $$\kappa(X,D):={\rm sup}\{\dim \Img(\phi_{|mD|})|m\in\bZ_{> 0}\}.$$ Here, $\phi_{|mD|}$ is the rational map $X\dashrightarrow \bP^N$ defined by $x\rightarrow [s_0(x),...,s_N(x)]$, where $\{s_i\}$ is a basis of $H^0(X,\lfloor mD\rfloor)$. If $D$ is not $\bQ$-effective, then $H^0(X,\lfloor mD\rfloor)$ is always empty, so we define $\kappa(D)=-\infty$.
\end{definition}

If $\kappa(X,D)\geq 0$, then there is another equivalent definition of the Iitaka dimension:

\begin{proposition}
    (cf.\cite{U}): $$\kappa(X,D):={\rm sup}\{k\in \bN|\empty \limsup_{m\rightarrow\infty}\frac{\dim H^0(X,\lfloor mD\rfloor)}{m^k}>0\}.$$
\end{proposition}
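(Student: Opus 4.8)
The plan is to deduce the identity from two asymptotic estimates for $\kappa:=\kappa(X,D)$ (which is $\geq 0$ by hypothesis):
$$\limsup_{m\to\infty}\frac{h^0(X,\lfloor mD\rfloor)}{m^{\kappa}}>0\qquad\text{and}\qquad h^0(X,\lfloor mD\rfloor)=O(m^{\kappa}).$$
Given these, the first shows that $\kappa$—and hence every integer $k\leq\kappa$—belongs to $S:=\{k\in\bN:\limsup_{m}h^0(X,\lfloor mD\rfloor)/m^{k}>0\}$, and the second shows that no integer $k>\kappa$ belongs to $S$; thus $\sup S=\kappa$, which is the claim. By a standard reduction (neither $\kappa(X,D)$ nor the asymptotics of $h^0(X,\lfloor mD\rfloor)$ are affected, in the relevant sense, by passing to a resolution; see \cite{U}) we may assume $X$ is smooth, so that every $\lfloor mD\rfloor$ is Cartier.

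For the lower estimate, pick $m_1$ with $\dim\Img(\phi_{|\lfloor m_1D\rfloor|})=\kappa$; this is possible because $\dim\Img(\phi_{|\lfloor mD\rfloor|})$ takes values in the finite set $\{0,\dots,\dim X\}$, so its supremum $\kappa$ is attained. Let $\mu:X'\to X$ resolve the rational map $\phi_{|\lfloor m_1D\rfloor|}$ and write $\mu^{*}\lfloor m_1D\rfloor\sim M+F$ with $M$ the movable (base point free) part and $F\geq 0$ the fixed part; the morphism $\phi_{|M|}:X'\to\bP^{N}$ has image a projective variety $V$ with $\dim V=\kappa$ and $\cO_V(1)$ ample. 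Since $\phi_{|M|}$ is dominant onto $V$, pulling back sections yields injections
$$H^0(V,\cO_{V}(p))\hookrightarrow H^0(X',pM)\hookrightarrow H^0(X',pM+pF)=H^0(X,p\lfloor m_1D\rfloor)\hookrightarrow H^0(X,\lfloor pm_1D\rfloor),$$
the last being the inclusion induced by the effective divisor $\lfloor pm_1D\rfloor-p\lfloor m_1D\rfloor\geq 0$. As $\cO_V(1)$ is ample on the $\kappa$-dimensional $V$ we have $h^0(V,\cO_V(p))=\tfrac{(\cO_V(1)^{\kappa})}{\kappa!}p^{\kappa}+O(p^{\kappa-1})\geq c\,p^{\kappa}$ for some $c>0$ and all $p\gg 0$; hence $h^0(X,\lfloor pm_1D\rfloor)\geq c\,p^{\kappa}$ for $p\gg 0$, so $\limsup_{m}h^0(X,\lfloor mD\rfloor)/m^{\kappa}\geq c/m_1^{\kappa}>0$.

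For the upper estimate, fix $m$ with $h^0:=h^0(X,\lfloor mD\rfloor)\geq 1$ (otherwise there is nothing to prove) and let $V_m\subseteq\bP^{h^0-1}$ be the closure of $\Img(\phi_{|\lfloor mD\rfloor|})$; it is an irreducible subvariety, non-degenerate because $\phi_{|\lfloor mD\rfloor|}$ is defined by a basis of $H^0(X,\lfloor mD\rfloor)$, and of dimension $d_m\leq\kappa$ directly by the definition of the Iitaka dimension. Since a non-degenerate irreducible subvariety of $\bP^{r}$ of dimension $d$ has degree at least $r-d+1$, we get $h^0(X,\lfloor mD\rfloor)\leq\deg V_m+\kappa$, so it suffices to show $\deg V_m=O(m^{\kappa})$. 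Fix once and for all an ample Cartier divisor $H$ on $X$ and an effective divisor $A'$ with $H_1:=D+A'$ ample (take $A'$ general in $|kH|$ for $k\gg 0$), and put $n:=\dim X$. On a resolution $\mu_m:X_m'\to X$ of $\phi_{|\lfloor mD\rfloor|}$, with $\mu_m^{*}\lfloor mD\rfloor\sim M_m+F_m$ ($M_m$ the movable part, base point free; $F_m\geq 0$ the fixed part), the morphism $\phi_{|M_m|}:X_m'\to V_m$ has general fibre mapping birationally onto an $(n-d_m)$-dimensional subvariety of $X$, hence meeting $(\mu_m^{*}H)^{n-d_m}$ with multiplicity $\geq 1$; by the projection formula this gives $\deg V_m\leq M_m^{d_m}\cdot(\mu_m^{*}H)^{n-d_m}$. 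Now $0\leq M_m\leq\mu_m^{*}\lfloor mD\rfloor\leq\mu_m^{*}(mD)\leq\mu_m^{*}(mH_1)$, with $M_m$ and $\mu_m^{*}(mH_1)$ nef and $\mu_m^{*}(mH_1)-M_m$ effective (up to linear equivalence); since an effective divisor meets a product of nef classes non-negatively, expanding $(\mu_m^{*}mH_1)^{d_m}-M_m^{d_m}$ gives
$$\deg V_m\leq M_m^{d_m}\cdot(\mu_m^{*}H)^{n-d_m}\leq(\mu_m^{*}mH_1)^{d_m}\cdot(\mu_m^{*}H)^{n-d_m}=m^{d_m}\bigl(H_1^{d_m}\cdot H^{n-d_m}\bigr)\leq C\,m^{\kappa},$$
where $C:=\max_{0\leq d\leq\kappa}\bigl(H_1^{d}\cdot H^{n-d}\bigr)$ and $m\geq 1$. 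Hence $h^0(X,\lfloor mD\rfloor)\leq C\,m^{\kappa}+\kappa=O(m^{\kappa})$.

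The upper estimate is the more substantial half; the delicate point is that $\dim V_m\leq\kappa$ is forced for every single $m$ by the definition of $\kappa(X,D)$, while the intersection-theoretic comparison still holds the degrees $\deg V_m$ to growth $O(m^{\kappa})$ rather than the a priori $O(m^{n})$. The two genuinely classical inputs used—the degree lower bound for non-degenerate subvarieties of projective space, and the invariance of $\kappa$ and of the relevant $h^0$-asymptotics under resolution—are standard. A more structural (but heavier) route to the upper estimate is to invoke the Iitaka Fibration Theorem (see \cite{U}): after a birational modification one reduces $h^0(X,\lfloor mD\rfloor)$ to a direct-image computation over the $\kappa$-dimensional base of the Iitaka fibration, whose very general fibre has Iitaka dimension $0$.
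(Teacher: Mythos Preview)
The paper does not give its own proof of this proposition; it simply records the statement with a citation to Ueno \cite{U}. Your argument is correct and is essentially the classical one found there (and in Lazarsfeld): the lower estimate via pulling back sections from the image of a fixed $\phi_{|\lfloor m_1D\rfloor|}$, and the upper estimate via the degree bound $\deg V_m\geq h^0-d_m$ for non-degenerate subvarieties combined with the intersection-theoretic bound $\deg V_m\leq M_m^{d_m}\cdot(\mu_m^*H)^{n-d_m}\leq C m^{\kappa}$.

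One small point worth tightening in your write-up: when you assert that the general fibre of $\phi_{|M_m|}$ ``meets $(\mu_m^{*}H)^{n-d_m}$ with multiplicity $\geq 1$,'' the cleanest justification is that $\mu_m$ is birational, so a general fibre of $\phi_{|M_m|}$ is not contained in the exceptional locus and hence maps birationally onto an $(n-d_m)$-dimensional subvariety of $X$, on which $H^{n-d_m}$ is a positive $0$-cycle by ampleness. This is what you say, but it is the one step a careful reader might want spelled out. Otherwise the argument is complete.
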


\begin{definition}
     (Kodaira dimension) Let $X$ be a smooth projective variety. The Kodaira dimension of $X$, denoted $\kappa(X)$, is defined as the Iitaka dimension of $K_X$. If $X$ is not smooth, then we define $\kappa(X)$ be the Kodaira dimension of a resolution of singularity of $X$. This definition is independent to the choice of resolution.
\end{definition}

\begin{definition}
    Let $X$ be a projective variety, and $D$ be a pseudo-effective $\bQ$-Cartier divisor on $X$. The numerical dimension of $D$, denoted by $\nu(X,D)$ (or by $\nu(D)$ if there is no ambiguity), is defined by: $$\nu(X,D):={\rm sup}\{k\in \bN|\empty \limsup_{m\rightarrow\infty}\frac{\dim H^0(X,\lfloor mD+A\rfloor)}{m^k}>0\},$$
    Where $A$ is a fixed ample Cartier divisor on $X$. If $D$ is not pseudoeffective, then we set $\nu(D)=-\infty$. 
\end{definition}

By definition, it is obviously that $\nu(X,D)\geq\kappa(X,D)$ for any $\bQ$-Cartier $\bQ$-divisor $D$. Also, if $D$ is nef, then we have (cf. \cite[Proposition V.2.7(6)]{N}): $$\nu(X,D)= \sup\{k\in n| D^k \not\equiv 0\}.$$

Next, we will introduce the notion of nef reduction and nef dimension, which is proved by Tsuji and Bauer et. al. in \cite{T} and \cite{BCE}.
\begin{theorem}\label{nr}
    Let $X$ be a normal projective variety, and $L$ be a $\bQ$-Cartier divisor on $X$. Then there exists an almost holomorphic dominant rational map $ f : X \rightarrow Y$ with connected fibres, called the nef reduction of L, such that:
    \begin{enumerate}
        \item $L$ is numerically trivial on all compact fibres $F$ of $f$ with $\dim F =\dim X-\dim Y$.
        \item For every very general point $x\in X$ and every irreducible curve $C$ on $X$ passing through $x$ and not contracted by $f$, we have $L.C > 0$. In other words, there exists a subset $Z\subset X$, which is a union of at most countably infinitely many proper closed subsets of $X$, such that if $C$ is a curve on $X$ with $L.C=0$, then either $f(C)$ is a point or $C\subset Z$.
    \end{enumerate}
    The map $f$ is unique up to the birational equivalence of $Y$. The nef dimension of $L$, denoted by $n(X,L)$ (or $n(L)$ if there is no ambiguity), is defined by $n(L):=\dim Y$.
\end{theorem}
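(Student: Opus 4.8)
We assume $L$ is nef, which is the intended hypothesis. The plan is to realize $f$ as a quotient of $X$ by the equivalence relation generated by curves on which $L$ is numerically trivial, following the approach of Bauer et al. (an analytic variant is due to Tsuji). First I would call an irreducible curve $C\subset X$ \emph{$L$-trivial} if $L\cdot C=0$ and declare two closed points of $X$ equivalent when they lie on a connected chain $C_1\cup\dots\cup C_r$ of $L$-trivial curves. The irreducible components of the Chow scheme of $X$ whose generic member is an $L$-trivial curve form a countable set; I would single out the subset $\Sigma=\{T_j\}_{j\in\bN}$ of those whose cycles cover $X$, and let $B\subsetneq X$ be the countable union of the proper closed subsets swept out by the remaining, non-covering families of $L$-trivial curves. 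A very general point of $X$ lies off $B$, so its equivalence class is generated by the families in $\Sigma$ alone.

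Next, for each finite $S\subseteq\Sigma$ I would invoke Campana's theory of quotients of a variety by a covering family of connected proper cycles (analogous to the rationally connected fibration of Koll\'ar--Miyaoka--Mori, but for arbitrary curves) to produce an almost holomorphic dominant rational map $f_S:X\dashrightarrow Y_S$ with connected fibres whose very general fibre is exactly the equivalence class generated by the families in $S$. The dimension of a very general fibre of $f_S$ is non-decreasing as $S$ grows and is bounded by $\dim X$, so it is maximized by some finite $S_0$, with value $e$. For any $S$ with $S_0\subseteq S\subseteq\Sigma$, a very general fibre of $f_{S_0}$ sits inside one of $f_S$, and both are irreducible of dimension $e$, so they coincide away from a further countable union of proper closed subsets. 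Throwing these into an exceptional set, the very general fibre of $f_{S_0}$ becomes the full $L$-equivalence class, and I would set $f:=f_{S_0}$ and $Y:=Y_{S_0}$. (Tsuji obtains the same map, up to birational modification of $Y$, from an analytic construction using a metric with minimal singularities on $L$.)

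Then I would check the stated properties. Connectedness of the fibres and almost holomorphy hold by construction, since the graph of $f$ is proper over a dense open subset of $Y$. For (2): a curve $C$ through a very general point $x$ with $L\cdot C=0$ is $L$-trivial and joins $x$ to points of its class $F_x=\overline{f^{-1}(f(x))}$, so $C\subseteq F_x$ and $f$ contracts it; the countable set $Z$ in the statement is the union of the indeterminacy locus of $f$, the set $B$, and the countably many further proper subvarieties produced in the stabilization step. For uniqueness: two almost holomorphic rational maps with connected fibres whose very general fibres agree as subsets of $X$ induce a birational map on their bases, and properties (1)--(2) pin down the very general fibre of such a map as the $L$-equivalence class of a very general point, so $Y$ is unique up to birational equivalence.

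The hard part will be (1): showing $L|_F\equiv 0$ for \emph{every} compact fibre $F$ of maximal dimension $\dim X-\dim Y$, not just the general one. Such an $F$ is chain-connected by $L$-trivial curves, so it suffices to prove the lemma that a projective variety chain-connected by curves on which a nef class is numerically trivial carries that class numerically trivially; by induction on dimension, restricting to a general fibre of the nef reduction of $L|_F$, this reduces to the assertion that a nef divisor whose nef reduction is a point is numerically trivial, which requires a genuine geometric argument exploiting nefness (bend-and-break when rational curves are available, a limiting argument on the chain-connecting family in general). This lemma, together with the bookkeeping of the countably many families of $L$-trivial curves --- Campana's quotient theorem by itself covers only a single covering family, and it is the dimension-stabilization argument, interlaced with the "very general point" hypothesis, that upgrades it to the full statement --- is the crux of the theorem.
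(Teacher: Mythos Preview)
The paper does not give its own proof of this statement: Theorem~\ref{nr} is recorded in the preliminaries as a known result, attributed to Tsuji \cite{T} and Bauer et al.\ \cite{BCE}, and is only invoked, never re-proved, in the rest of the article. Your sketch follows the standard construction of Bauer et al.\ (Campana-type quotient by covering families of $L$-trivial curves, stabilization over finite subfamilies, with the non-covering families absorbed into the exceptional set $Z$), so it is in line with the cited literature; there is simply no proof in this paper to compare it against.
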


Here we recall some important properties of nef dimension.

\begin{proposition}
Let $X$ be a normal projective variety and $L$ be a nef $\bQ$-Cartier divisor on $X$. Then:
    \begin{enumerate}
        \item (cf. \cite[Proposition 2.8]{BCE}) $\kappa(L)\leq\nu(L)\leq n(L)$;
        \item (cf. \cite[Proposition 2.1]{BP}) Let $f: Y\rightarrow X$ be a surjective morphism between normal projective varieties, then we have $n(f^*L)=n(L)$.
    \end{enumerate}
\end{proposition}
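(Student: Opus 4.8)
The plan is to prove the two items in turn, relying on the nef reduction map $f:X\dashrightarrow Y$ supplied by Theorem \ref{nr}.

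For item (1), the inequality $\kappa(L)\leq\nu(L)$ is immediate from the definitions, since $H^0(X,\lfloor mL\rfloor)\subseteq H^0(X,\lfloor mL+A\rfloor)$ for any ample $A$, so the supremum defining $\nu$ is over a larger collection of growth rates. For $\nu(L)\leq n(L)$, I would use the characterization $\nu(L)=\sup\{k : L^k\not\equiv 0\}$ valid for nef $L$ (quoted in the excerpt from \cite[Proposition V.2.7(6)]{N}), and show that $L^{n(L)+1}\equiv 0$. The key geometric input is property (1) of the nef reduction: $L$ is numerically trivial on the general fibre $F$ of $f$, and $\dim F = \dim X - n(L)$. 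Passing to a smooth model $\widetilde{X}\to X$ on which $f$ becomes a morphism $\widetilde{f}:\widetilde{X}\to \widetilde{Y}$ (with $\dim\widetilde{Y}=n(L)$), the pullback $\widetilde{L}$ of $L$ restricts to a numerically trivial divisor on the general fibre of $\widetilde{f}$; a standard argument (e.g. writing $\widetilde{L}^{n(L)+1}$ as a cycle, using that $n(L)+1$ copies of $\widetilde{L}$ must meet a general fibre and that $\widetilde{L}$ restricted there is numerically trivial, or more cleanly the fact that $\widetilde{f}_*(\widetilde{L}^{n(L)+1})$ computes the self-intersection and the fibre dimension forces vanishing) gives $\widetilde{L}^{n(L)+1}\equiv 0$, hence $L^{n(L)+1}\equiv 0$ by projection formula, so $\nu(L)\leq n(L)$.

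For item (2), let $g:Y\to X$ be a surjective morphism of normal projective varieties and $L$ nef on $X$, so $g^*L$ is nef on $Y$. Let $h:X\dashrightarrow X'$ be the nef reduction of $L$. The strategy is to show the composition $X\dashrightarrow X'$ pulled back along $g$ is (birationally) the nef reduction of $g^*L$, which would give $n(g^*L)=\dim X'=n(L)$. First resolve so that everything is a morphism: take a common resolution $\widetilde{Y}$ dominating $Y$ and the graph of $h\circ g$, giving morphisms $\widetilde{g}:\widetilde{Y}\to X$ and $\widetilde{h}:\widetilde{Y}\to X'$ (up to replacing $X'$ birationally), with $\widetilde{g}^*L$ numerically trivial on the general fibre of $\widetilde{h}$ — this verifies property (1) of Theorem \ref{nr} for $\widetilde{h}$ with respect to $\widetilde{g}^*L$. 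Then one checks property (2): a very general curve $C\subseteq\widetilde{Y}$ not contracted by $\widetilde{h}$ maps to a very general curve $\widetilde{g}(C)$ in $X$ not contracted by $h$ (here one must argue that the countable union of ``bad'' subvarieties $Z\subseteq X$ pulls back to a countable union of proper subvarieties of $\widetilde{Y}$, using properness/surjectivity of $\widetilde{g}$ and that $\widetilde{g}^{-1}(Z)$ has no component dominating $\widetilde{Y}$ when $Z\subsetneq X$), so $L\cdot\widetilde{g}(C)>0$ and hence $g^*L\cdot C = L\cdot\widetilde{g}_*C>0$. By the uniqueness clause in Theorem \ref{nr}, $\widetilde{h}$ is the nef reduction of $\widetilde{g}^*L$ up to birational equivalence of the target, so $n(\widetilde{g}^*L)=\dim X'=n(L)$; finally $n(g^*L)=n(\widetilde{g}^*L)$ since $\widetilde{Y}\to Y$ is a resolution and nef dimension is a birational invariant of the pair $(Y,g^*L)$ (which itself follows from the uniqueness in Theorem \ref{nr}, or can be cited).

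\textbf{Main obstacle.} The delicate point is handling the countable union $Z$ in property (2) of Theorem \ref{nr} under pullback: one needs that ``very general'' behaves well, i.e. that the preimage $\widetilde{g}^{-1}(Z)$ of a countable union of proper closed subsets of $X$ is again a countable union of proper closed subsets of $\widetilde{Y}$, and that a very general curve on $\widetilde{Y}$ avoids it and maps to a very general curve on $X$. This is essentially bookkeeping with countable unions and the fact that surjective morphisms do not collapse the ``general'' locus, but it must be done carefully; the rest is formal manipulation with the nef reduction and the projection formula.
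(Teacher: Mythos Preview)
The paper does not prove this proposition: both parts are stated with ``cf.'' citations to \cite{BCE} and \cite{BP}, and no argument is given in the paper itself. Immediately after the statement the paper adds the one-line remark that if $\pi:X\dashrightarrow Z$ is a nef reduction of $L$ then $\pi\circ f:Y\dashrightarrow Z$ is a nef reduction of $f^*L$; this is exactly your strategy for item~(2), and your identification of the countable-union bookkeeping as the only delicate point is accurate. One small omission in your argument: $\pi\circ f$ need not have connected fibres when $f$ does not, so strictly one should pass to the Stein factorisation of the composed map --- but the dimension of the target is unchanged, so $n(f^*L)=n(L)$ still follows.

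For item~(1) your sketch of $\nu(L)\le n(L)$ contains a genuine imprecision. The vanishing of the pushforward $\widetilde{f}_*\bigl(\widetilde{L}^{\,n(L)+1}\bigr)$ does not by itself force $\widetilde{L}^{\,n(L)+1}\equiv 0$, since a nonzero effective cycle class can push forward to zero; and the phrase ``$n(L)+1$ copies of $\widetilde{L}$ must meet a general fibre'' is too vague to constitute an argument. A clean fix is to cut $\widetilde{X}$ by $\dim X-n(L)-1$ general very ample divisors to obtain a general $(n(L)+1)$-fold $V\subseteq\widetilde{X}$, so that $\widetilde{f}|_V:V\to\widetilde{Y}$ is surjective with one-dimensional general fibres on which $\widetilde{L}|_V$ is numerically trivial. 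A big nef divisor cannot be numerically trivial on a covering family of curves (write big as ample plus effective; a general curve in the family avoids the support of the effective part and meets the ample part positively), so $\widetilde{L}|_V$ is not big, i.e.\ $(\widetilde{L}|_V)^{n(L)+1}=0$. This equals $\widetilde{L}^{\,n(L)+1}\cdot H^{\dim X-n(L)-1}$ for ample $H$, whence $\nu(L)\le n(L)$.
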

In the second case of above, it is easy to check that if $\pi: X\dashrightarrow Z$ is a nef reduction map of $L$, then $\pi\circ f:Y\dashrightarrow Z$ is a nef reduction map of $f^*L$.

\begin{definition}
    Let $X$ be a normal projective variety and $L$ be a pseudo-effective $\bQ$-Cartier divisor on $X$. We say $L$ is  {\em abundant} (or {\em good}) if the equality $\kappa(L)=\nu(L)$ holds. 
\end{definition}
By \cite[Remark 2.3]{A}, if $L$ is nef and abundant, then $\kappa(L)=\nu(L)=n(L)$. Moreover, if $L$ is semiample, then the nef reduction map coincides with the semiample fibration of $L$, and hence $L$ is abundant. In general, studying nef divisors that are not abundant is very difficult. However, for smooth projective surfaces, Ambro provides the following classification of nef divisors and those with maximal nef dimension:
\begin{theorem}(cf. \cite[Theorem 0.3]{A})
    Let $X$ be a smooth projective surface, and $L$ be a nef divisor with $n(L)=2$, then one of the following holds:
    \begin{enumerate}
    \item $D$ itself is big ;
    \item $D$ is not big, but $K_{X}+tD$ is big for all $t>2$;
    \item there is a birational morphism $\mu:X\rightarrow Y$ between smooth projective surfaces such that $sD\equiv \mu^*(-K_{Y})$ for some positive rational number $0<s\leq 2$, and $D$ is algebraically equivalent  to an effective $\bQ$-divisor $D_e$.
    \end{enumerate}
\end{theorem}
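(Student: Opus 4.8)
The plan is to read numerical data off $n(L)=2=\dim X$ and split into cases according to the numerical dimension $\nu(L)$ (writing $D=L$, as in the statement). Since $\nu(L)=0$ forces $L\equiv 0$ and hence $n(L)=0$, and since $\nu(L)\le n(L)=2$, we have $\nu(L)\in\{1,2\}$. If $\nu(L)=2$, then $L$ is nef with $L^2>0$, and Riemann--Roch — $h^0(mL)+h^0(K_X-mL)\ge\chi(mL)=\tfrac{m^2L^2}{2}+O(m)$, with $h^0(K_X-mL)=0$ for $m\gg 0$ because $(K_X-mL)\cdot A<0$ for an ample $A$ — gives $\kappa(L)=2$, so $L$ is big: conclusion (1).

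Assume now $\nu(L)=1$, i.e. $L^2=0$, $L\not\equiv 0$; then $\kappa(L)\le\nu(L)=1<2$, so $L$ is not big, and the task is to land in (2) or (3). Two reductions: first, $\kappa(L)\le 0$, since otherwise $\kappa(L)=\nu(L)=1$ makes $L$ abundant, whence $n(L)=\kappa(L)=1$ by \cite[Remark 2.3]{A}, a contradiction; second, by Theorem~\ref{nr}(2), since the nef reduction of $L$ is birational, the curves $C$ with $L\cdot C=0$ lie in a countable union of proper closed subsets, hence do not dominate $X$. Finally, bigness of $K_X+tL$ persists as $t$ increases (a big plus a nef is big), so $\{t\ge 0:K_X+tL\text{ is big}\}$ is empty or a half-line $(\lambda,\infty)$; set $\lambda\in[0,\infty]$. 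If $\lambda\le 2$, then $K_X+tL$ is big for every $t>2$, and since $L$ is not big this is exactly conclusion (2).

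It remains to treat $\lambda>2$, i.e. $K_X+2L$ not big, and produce the structure in (3). From $(K_X+2L)^2=K_X^2+4\,K_X\!\cdot\!L$, $(K_X+2L)\cdot L=K_X\!\cdot\!L$ and $L^2=0$, failure of $K_X+2L$ to be big constrains $K_X\!\cdot\!L$ and $K_X^2$ so that $-K_X$ is nearly nef; in particular $X$ is not minimal, since if $K_X$ were nef then either $K_X\!\cdot\!L>0$ (so $K_X+tL$ is nef with positive self-intersection, hence big, for all $t>0$, giving $\lambda=0$) or $K_X\!\cdot\!L=0$ (forcing $K_X^2=0$, then a fibration orthogonal to $L$ or $K_X\equiv 0$ — both excluded by $n(L)=2$ and $\kappa(L)\le 0$). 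One then runs a $K_X$-MMP compatible with $L$: contract a $(-1)$-curve $E$ with $L\cdot E=0$ (available among the countably many $L$-trivial curves, forced by the negativity of $K_X$), whereupon $L$ descends as $L=\mu_1^*L_1$ with $L_1$ nef, $L_1^2=0$, $n(L_1)=2$. The Picard number drops each time, so this terminates at a birational morphism $\mu:X\to Y$ of smooth surfaces with $L=\mu^*L_Y$, $L_Y$ nef, $L_Y^2=0$, $n(L_Y)=2$, no $L_Y$-trivial $(-1)$-curve on $Y$, and still $\lambda(Y,L_Y)>2$. For such $Y$ one shows $-K_Y$ is nef and numerically proportional to $L_Y$ — proportionality coming from the fact that in the hyperbolic lattice $\NS(Y)$ two non-proportional classes of self-intersection $0$ pair positively, applied to $L_Y$ and to the class $K_Y+t_0L_Y$ at the value $t_0$ where it first becomes nef (the alternative, positive self-intersection there, would put us back in conclusion (2)) — say $sL_Y\equiv -K_Y$ with $0<s\le 2$, the bound read off from $\lambda>2$. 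Finally $-K_Y$ nef with $(-K_Y)^2=0$ is $\bQ$-effective — for rational $Y$ by Riemann--Roch ($\chi(\cO_Y)=1$, $h^2(-K_Y)=h^0(2K_Y)=0$), and in the one remaining family (minimal ruled over an elliptic curve, e.g. $-K_Y=2C_0$ with $C_0^2=0$) by inspection — so $L=\tfrac1s\mu^*(-K_Y)$ is $\bQ$-linearly, a fortiori algebraically, equivalent to an effective $\bQ$-divisor: conclusion (3).

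The heart of the matter is the $\lambda>2$ branch, and within it the production of the precise rational number $s\in(0,2]$ with $sL_Y\equiv -K_Y$. The threshold $2=n(L)$ is the delicate numerical input; making it precise should rest on a sharp Riemann--Roch/Reider-type estimate bounding $K_{X_i}\!\cdot\!L_i$ and $K_{X_i}^2$ along the MMP, together with verifying that this $K_X$-MMP never exits through a Mori fibre space before $-K$ becomes nef, and that $-K_Y$ is $\bQ$-effective uniformly across the (predominantly rational) possibilities for $Y$.
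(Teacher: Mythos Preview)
The paper does not prove this statement: it is quoted from Ambro \cite[Theorem~0.3]{A} and used only as a black box in the proof of Lemma~\ref{nd2}. There is therefore no proof in the paper to compare your argument against.

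On its own merits your outline has the right architecture --- the split on $\nu(L)$, the reduction to $\kappa(L)\le 0$, the threshold $\lambda$, and the $L$-trivial MMP are exactly the ingredients of Ambro's (and earlier Serrano's \cite{S}) argument --- but the $\lambda>2$ branch contains a real gap. You assert that the bound $0<s\le 2$ is ``read off from $\lambda>2$''; this cannot be right, because once $-K_Y\equiv sL_Y$ one has $K_Y+tL_Y\equiv(t-s)L_Y$, which is never big, so $\lambda(X,L)=\lambda(Y,L_Y)=\infty$ for \emph{every} $s>0$ and the threshold $\lambda$ carries no information about $s$. What actually forces $s=t_0\le 2$ is the cone theorem on $Y$: if $K_Y+2L_Y$ were not nef, a $(K_Y+2L_Y)$-negative extremal ray would be generated either by a $(-1)$-curve $E$ with $L_Y\cdot E=0$ (excluded by the termination of your $L$-trivial MMP), or by a fibre class $f$ of a ruling with $L_Y\cdot f=0$ (excluded by $n(L_Y)=2$), or else $Y\cong\bP^2$ (excluded by $L_Y^2=0$). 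This step is the substance behind the constant~$2$, and it is missing from your write-up. Similarly, your parenthetical that an $L$-trivial $(-1)$-curve is ``available \ldots forced by the negativity of $K_X$'' is both unnecessary and unjustified: if no such curve exists the MMP is already finished and $Y=X$, which is harmless. Finally, excluding the case $K_X$ nef with $K_X\cdot L=0$ (your ``$K_X\equiv 0$'' sub-branch) requires knowing that on a minimal surface of Kodaira dimension~$0$ every nef divisor of square zero has $n\le 1$; this is true but needs abundance for K3/Enriques surfaces and the explicit structure of abelian/bielliptic surfaces, none of which you invoke. With these points supplied your plan does go through, but as written --- and as your last paragraph concedes --- it is a plan rather than a proof.
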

This classification is very helpful for studying nef divisors with $n(L)=2$ on higher dimensional varieties by using the nef reduction map of $L$. 

\section{The Generalized Nonvanishing Conjecture for divisors with small nef dimension}

In this section, we will discuss the Generalized Nonvanishing Conjecture for nef divisors with small nef dimensions. To begin, by \cite[Proposition 2.11]{BCE}, a nef divisor $L$ on a smooth projective variety $X$ with $n(L)=1$ is numerically semiample. We aim to extend this result to varieties with rational singularities by applying the birational descent trick for nef divisors in \cite[Section 2 and Section 3]{LP}. 

\begin{lemma}\label{nd1}
Let $X$ be a normal projective variety with rational singularities, and $L$ be a nef $\bQ$-Cartier divisor on $X$. If $n(L)\leq 1$, then $L$ is numerically semiample. 
\end{lemma}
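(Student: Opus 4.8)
The strategy is to reduce to the smooth case, where the result is \cite[Proposition 2.11]{BCE}, by a birational descent argument. First I would take a log resolution $\pi\colon X'\to X$ of $X$ (with $X'$ smooth projective), and consider the nef $\bQ$-Cartier divisor $L':=\pi^*L$ on $X'$. By the compatibility of nef reduction with surjective pullbacks (the proposition after Theorem \ref{nr}, citing \cite[Proposition 2.1]{BP}), we have $n(X',L')=n(X,L)\le 1$. Hence by \cite[Proposition 2.11]{BCE} the divisor $L'=\pi^*L$ is numerically semiample on $X'$: there is a positive integer $m$ and a base point free divisor $B'$ on $X'$ with $m\pi^*L\equiv B'$.

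The remaining task is to descend the numerical semiampleness of $\pi^*L$ from $X'$ back to $X$. This is exactly where the hypothesis of rational singularities enters. The idea is that if $N$ is a nef Cartier divisor on $X'$ with $N\equiv \pi^*L$ (say $N=m\pi^*L$ up to replacing $L$ by a multiple), then since $\pi$ has connected fibres and $X$ has rational singularities, $R^0\pi_*\cO_{X'}=\cO_X$ and the higher pushforwards of $\cO_{X'}$ vanish in the relevant range, so cohomology and base change / projection-formula arguments let one push a suitable linear system down. Concretely, following \cite[Section 2 and Section 3]{LP}, one shows: the nef divisor $L'$ being numerically semiample means $L'\equiv B'$ with $B'$ semiample; one then produces on $X$ a $\bQ$-Cartier divisor numerically equivalent to $L$ whose pullback is $B'$ — the key point being that numerical triviality of $L'$ on the fibres of $\pi$ (which holds because $\pi$ is birational, so the fibres are contracted and $\pi^*L$ is trivial on them) guarantees, via the rational singularities hypothesis and a Kawamata–Viehweg/Grauert-type statement, that $B'$ itself descends to a numerically semiample divisor on $X$, or at least that its numerical class does. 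Then $m L\equiv (\text{descended class})$ exhibits $L$ as numerically semiample.

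The main obstacle I anticipate is precisely this descent step: making rigorous that numerical semiampleness (as opposed to, say, semiampleness of an actual divisor class) descends along $\pi$ using only rational singularities. One has to be careful because $B'$ is only determined up to numerical equivalence, $\pi$ need not be a morphism with nice direct images of line bundles, and one is working modulo numerical equivalence throughout; the cleanest route is to invoke the "birational descent trick for nef divisors" of Lazi\'c–Peternell \cite[Section 2 and Section 3]{LP} essentially verbatim, checking that their hypotheses are met here — namely that $X'\to X$ is a resolution with $X$ having rational singularities, and that $L$ is nef $\bQ$-Cartier — and that the conclusion "numerically semiample" is preserved. Everything else (choosing the resolution, invoking the smooth case, the equality of nef dimensions) is routine.
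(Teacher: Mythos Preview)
Your approach is correct and matches the paper's strategy: pass to a resolution, establish numerical semiampleness there, and descend via \cite[Lemma 2.14]{LP} --- this is precisely the one-line ``birational descent trick'' you were searching for (it uses the rational singularities hypothesis), so the ``main obstacle'' you anticipate dissolves into a single citation. The only cosmetic difference is that the paper does not quote \cite[Proposition 2.11]{BCE} for the smooth case but instead redoes that argument explicitly, using the nef reduction and \cite[Lemma 3.1]{LP} to write the pullback of $L$ as $f'^*D'$ for a nef (hence ample, hence semiample) $\bQ$-divisor $D'$ on the one-dimensional base $Y'$; this buys nothing extra for Lemma~\ref{nd1} itself, but the same setup is recycled verbatim in the proof of Lemma~\ref{nd2}.
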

\begin{proof}
By replacing $L$ with $mL$ for some positive integer $m$, we may assume $L$ is Cartier.  Let $\phi:X'\rightarrow X$ be a resolution of singularities of $X$. By \cite[Lemma 2.14]{LP}, if $\phi^*L$ is numerically semiample, then $L$ is also numerically semiample. Since $n(\phi^*L)=n(L)$, it suffices to prove this lemma under the assumption $X$ is smooth. 

Now, if $n(L)=0$ then $L\equiv 0$, and hence the result is trivial. If $n(L)=1$, let $f:X\dashrightarrow Y$ be a nef reduction map of $L$ with general fiber $F$ such that $Y$ is smooth. Then we have $L|_F\equiv 0$. Let $\pi_0:X_0\rightarrow X$ be a resolution of indeterminacy of $f$ with $X_0$ smooth, and let $f_0:X_0\rightarrow Y$ be the induced map. Since the nef reduction is almost holomorphic, $F$ is still a general fiber of $f_0$. Now, for any birational morphism $\mu:Y'\rightarrow Y$ from a smooth projective varieties, there is a diagram
\begin{center}
    \begin{tikzcd}
&X'\arrow[r,"\pi''"]&\tilde{X}\arrow[r,"\pi'"]\arrow[d,"f'"] &X_0\arrow[d,"f_0"]\arrow[r,"\pi_0"]&X\\
& &Y'\arrow[r,"\mu"] &Y &
\end{tikzcd}
\end{center}
such that $\pi''$, $\pi'$ and $\mu$ are birational,  $\tilde{X}$ is the normalization of the main component of $X_0\times_{Y} Y'$, and $X'$ is a resolution of singularities of $\tilde{X}$. By \cite[Lemma 3.1]{LP} for good choice of $Y'$, there is a $\bQ$-divisor $D'$ on $Y'$ such that $f'^*D'\equiv \pi'^*\pi_0^*L$ (in our case, since $\dim Y=1$, $Y'=Y$ and $\mu$ is the identity map). Let $f'':=f'\circ \pi'':X'\rightarrow Y'$ and $\pi'':=\pi''\circ \pi':X'\rightarrow X_0$, we still have $f''^*D'\equiv \pi''^*\pi'^*\pi_0^*L$. Therefore, by \cite[Lemma 2.14]{LP}, to show $L$ is numerically semiample, it suffices to show that $D'$ is numerically semiample. Note that $\dim(Y')=n(L)=1$. Thus, any numerically non-trivial nef divisor on $Y'$ is ample. Hence $D'$ is ample and we are done.
\end{proof}

By \cite[Thoerem 5.22]{KM}, if $(X,\Delta)$ is a klt pair, then $X$ has rational singularities. Therefore, Lemma \ref{nd1} implies $N_{d,k,q,1}$ is true if $k\geq 0$. Employing a similar approach to the above proof, along with Ambro's classification of nef divisors on smooth projective surfaces with maximal nef dimension (\cite[Theorem 0.3]{A}), and Hashizume's theorem for a generalization of the Iitaka Conjecture under the assumption that general fibers have an abundant canonical divisor  (\cite[Theorem 1.4]{H}), we can prove $N_{d,k,q,2}$ is true if $d\leq 5$:

\begin{lemma}\label{nd2}
Let $(X,\Delta)$ be a klt pair of normal projective variety. Let $L$ be a nef $\bQ$-Cartier divisor on $X$ with $n(L)\leq 2$ and $f:X\dashrightarrow Y$ be a nef reduction map of $L$ with general fiber $F$. Assume $\dim F\leq 3$. Let $\Delta_F$ be the $\bQ$-divisor on $F$ defined by $(K_X+\Delta)|_F=K_F+\Delta_F$. Suppose that $\kappa(F,K_F+\Delta_F)\geq 0$, then $K_{X}+\Delta+tL$ is numerically $\bQ$-effective for $t\gg 0$. In particular, if $\kappa(K_X+\Delta)\geq 0$, then for every $t\geq 0$ $K_X+\Delta+tL$ numerically $\bQ$-effective.
\end{lemma}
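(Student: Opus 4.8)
\textbf{Proof plan for Lemma \ref{nd2}.}

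The plan is to run a reduction to a two-dimensional base exactly parallel to the proof of Lemma \ref{nd1}, then invoke Ambro's classification and Hashizume's generalized Iitaka-type theorem on the nef reduction fibration. First I would normalize: replace $L$ by a Cartier multiple, resolve the indeterminacy of $f$ by a birational morphism $\pi_0 : X_0 \to X$ with $X_0$ smooth and induced morphism $f_0 : X_0 \to Y$, $Y$ smooth, $\dim Y = n(L) \le 2$; since the nef reduction is almost holomorphic, $F$ remains a general fiber of $f_0$ and $L|_F \equiv 0$. For a suitable birational $\mu : Y' \to Y$ I would form the tower $X' \xrightarrow{\pi''} \tilde X \xrightarrow{\pi'} X_0$ as in Lemma \ref{nd1}, with $\tilde X$ the normalization of the main component of $X_0 \times_Y Y'$ and $X'$ a resolution; by \cite[Lemma 3.1]{LP}, for a good choice of $Y'$ there is a $\bQ$-divisor $D'$ on $Y'$ with $f'^* D' \equiv (\pi' \circ \pi'')^* \pi_0^* L$, where $f' : X' \to Y'$ is the induced morphism. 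Pulling the klt pair $(X, \Delta)$ back and taking a log resolution, I get a smooth pair $(X', \Delta')$ with $K_{X'} + \Delta' = (\pi_0 \circ \pi' \circ \pi'')^*(K_X + \Delta) + E$ for suitable boundary/exceptional data; the key point is that on the general fiber $F'$ of $f'$ (which maps birationally to $F$) we still have $\kappa(F', K_{F'} + \Delta'_{F'}) = \kappa(F, K_F + \Delta_F) \ge 0$, and $\dim F' \le 3$. By \cite[Lemma 2.14]{LP} and the fact that numerical $\bQ$-effectivity descends along birational morphisms, it suffices to produce, for $t \gg 0$, an effective $\bQ$-divisor numerically equivalent to $K_{X'} + \Delta' + t (\pi_0 \circ \pi' \circ \pi'')^* L \equiv K_{X'} + \Delta' + t f'^* D'$.

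Now I would split on the nef dimension. If $n(L) \le 1$ the statement reduces to Lemma \ref{nd1} applied to an appropriate modification (or is immediate once $\kappa(K_X+\Delta) \ge 0$, since then $K_X + \Delta$ is already $\bQ$-effective and we add the numerically semiample $tL$). The substantive case is $\dim Y' = 2$. Here I would first handle the case where $D'$ is big on the surface $Y'$: then $f'^* D'$ is big, so for $t \gg 0$ the class $K_{X'} + \Delta' + t f'^* D'$ is big, hence $\bQ$-effective, and we are done. Otherwise $D'$ is nef but not big on the surface $Y'$, so $\nu(D') \le 1$; replacing $Y'$ by a further blow-down I may apply Ambro's Theorem \cite[Theorem 0.3]{A} to the nef divisor $D'$ (after discarding the big and $K_{Y'}$-positive alternatives, which are precisely the cases where we can already win as above): we land in case (3), so there is a birational morphism $\nu : Y' \to Z$ onto a smooth surface with $s D' \equiv \nu^*(-K_Z)$ for some $0 < s \le 2$, and moreover $D'$ is algebraically — hence numerically — equivalent to an effective $\bQ$-divisor $D_e'$. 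The point of the $-K_Z$ description is that it lets me rewrite $K_{X'} + \Delta' + t f'^* D'$, up to numerical equivalence and birational pushforward, in a form where the contribution over $Z$ along the base is controlled by $-K_Z$, i.e. the pair relative to the fibration $X' \to Z$ becomes one to which Hashizume's theorem \cite[Theorem 1.4]{H} applies.

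The decisive step — and the main obstacle — is the invocation of \cite[Theorem 1.4]{H}: for the fibration $g : X' \to Z$ (or $X' \to Y'$) whose general fiber $F'$ satisfies $\dim F' \le 3$ and $\kappa(F', K_{F'} + \Delta'_{F'}) \ge 0$, the canonical divisor of the fiber pair is abundant (for threefolds this is Kawamata's theorem \cite{Kaw2} on good minimal models, valid in dimension $\le 3$), so Hashizume's generalized Iitaka inequality gives $\kappa(X', K_{X'} + \Delta' + (\text{ample-type pullback from the base})) \ge \kappa(F', K_{F'}+\Delta'_{F'}) + (\dim Z) \ge 0$. Concretely: since $-K_Z$ is represented, via $D'$, by an effective class and $n(D') = \dim Y' = 2$ forces $D'$ (equivalently $-K_Z$, on the part of $Z$ over which it is effective and numerically positive) to behave like a big-on-its-nef-reduction divisor on the surface, adding $t f'^* D'$ for $t$ large enough feeds enough positivity from the base into $K_{X'} + \Delta' + t f'^* D'$ that Hashizume's theorem yields $\bQ$-effectivity, hence numerical $\bQ$-effectivity. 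The bookkeeping I expect to be delicate is: (i) checking $\Delta_F' $ stays a boundary and $\kappa(F', K_{F'}+\Delta_{F'}')\ge 0$ is preserved under the base change and resolutions; (ii) ensuring the effective divisor $D_e'$ from Ambro's case (3) can be absorbed so that $t f'^* D'$ genuinely contributes as a big-type class on the two-dimensional base after restricting to the open locus where the nef reduction is a morphism; and (iii) the passage between "numerically $\bQ$-effective" and "$\bQ$-effective" — which is exactly why the conclusion is stated only up to numerical equivalence. The final sentence of the lemma, that $\kappa(K_X+\Delta) \ge 0$ implies numerical $\bQ$-effectivity of $K_X + \Delta + tL$ for \emph{all} $t \ge 0$, then follows by taking $F$ the general nef-reduction fiber, noting $\kappa(F, K_F + \Delta_F) \ge 0$ by easy addition / restriction, applying the first part for $t \gg 0$, and interpolating: for $0 \le t \le t_0$ write $K_X + \Delta + tL$ as a convex combination of $K_X + \Delta$ (which is $\bQ$-effective) and $K_X + \Delta + t_0 L$ (numerically $\bQ$-effective), using that $L$ is nef so the combination is again in the pseudo-effective-and-numerically-effective cone.
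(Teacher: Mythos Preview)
Your overall architecture matches the paper's---resolve the nef reduction, descend $L$ to a $\bQ$-divisor $D'$ on the surface $Y'$ via \cite[Lemma 3.1]{LP}, invoke Ambro's trichotomy, and conclude by Hashizume---but there is a genuine gap in how you handle Ambro's cases (1) and (2), and a parallel gap in the case $n(L)=1$.

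You write that when $D'$ is big on $Y'$, ``then $f'^*D'$ is big, so for $t\gg 0$ the class $K_{X'}+\Delta'+t f'^*D'$ is big.'' This is false: $f':X'\to Y'$ has positive-dimensional fibers (we are assuming $\dim F\ge 1$), so $f'^*D'$ restricts to zero on a general fiber and $(f'^*D')^{\dim X'}=0$; the pullback of a big divisor along a non-birational fibration is never big. Consequently $K_{X'}+\Delta'+tf'^*D'$ need not be big either (take $X'=F'\times Y'$ with $\kappa(K_{F'}+\Delta'_{F'})=0$). The same issue undermines your reduction of the $n(L)=1$ case to Lemma~\ref{nd1}: numerical semiampleness of $L$ alone does not yield numerical $\bQ$-effectivity of $K_X+\Delta+tL$ under the hypothesis $\kappa(F,K_F+\Delta_F)\ge 0$, which says nothing about $K_X+\Delta$ globally.

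The fix, which the paper carries out uniformly in all cases, is to apply Hashizume's theorem \cite[Theorem 1.4]{H} from the start rather than only in Ambro's case (3). The correct form involves the \emph{relative} canonical divisor: for any $\bQ$-effective $M$ on $Y'$,
\[
\kappa\bigl(X',\,K_{X'}+\Delta'+E^- - f'^*K_{Y'}+f'^*M\bigr)\ \ge\ \kappa\bigl(F',(K_{X'}+\Delta'+E^-)|_{F'}\bigr)+\kappa(Y',M)\ \ge\ \kappa(Y',M),
\]
using that $(K_{X'}+\Delta'+E^-)|_{F'}$ is abundant because $\dim F'\le 3$. One then chooses $M=K_{Y'}+tD'$ so that the left side becomes $\kappa(K_{X'}+\Delta'+E^-+tf'^*D')$. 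In Ambro's cases (1)--(2) and in the curve case $n(L)=1$, $M=K_{Y'}+tD'$ is big for $t\gg 0$, giving $\kappa\ge 0$ directly. In case (3) one instead takes $M=(K_{Y'}-\mu^*K_{Y_0})+(t-s)D_e$, which is effective and numerically equivalent to $K_{Y'}+tD'$; this is the precise bookkeeping your sketch leaves vague. Your interpolation argument for the final sentence is correct and matches the paper.
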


\begin{proof}

This statement is trivial if $n(L)=0$, so we may assume $n(L)\geq 1$. As in the proof of Lemma \ref{nd1}, we may assume $L$ is Cartier and there is a diagram
\begin{center}
    \begin{tikzcd}
&X'\arrow[r,"\pi"]\arrow[d,"f'"] &X\\
&Y' &
\end{tikzcd}
\end{center}
such that $X'$ is and $Y'$ are smooth, $\pi$ is birational, $f'$ is an algebraic fiber space with general fibers whose general fibers $F'$ is birational to $F$, and there is a $\bQ$-divisor $D'$ on $Y'$ such that $f'^*D'\equiv \pi^*L$. Moreover, by replacing $X'$ with a log resolution of $\Exc(\pi)\cup\pi^{-1}( \Supp(\Delta))$, we may assume $\Supp(\Delta')\cup \Exc(\pi)$ is simple normal crossing, where $\Delta'$ be the proper transform of $\Delta$ on $X'$.

Note that $D'$ is nef with $n(D')=n(L)=n(\pi^*L)\leq 2$. Now, we write $$K_{X'}+\Delta'+E^-=\pi^*(K_X+\Delta)+E^+,$$ where both $E^+,E^-$ are effective $\pi$-exceptional $\bQ$-divisors such that $\Supp E^+$ and $\Supp E^-$ has no common components. Note that $(X',\Delta'+E^-)$ is klt since $\Delta'+E^-$ has snc support and every coefficient 
of $\Delta'+E^-$ is less than 1 (because we had assume $(X,\Delta)$ is klt). Moreover, since $\dim F\leq 3$ and $$(K_{X'}+\Delta'+E^-)|_{F'}=(\pi^*(K_X+\Delta)+E^+)|_{F'}=\pi^*(K_F+\Delta_F)+E^+|_{F'}$$ has non-negative Kodaira dimension, $(K_{X'}+\Delta'+E^-)|_{F'}$ is abundant. Therefore, by \cite[Theorem 1.4]{H}, for any $\bQ$-effective $\bQ$-Cartier divisor $M$ on $Y'$, we have $$\kappa(X', K_{X'}+\Delta'+E^--f'^*K_{Y'}+ f'^*M)\geq \kappa(F',(K_{X'}+\Delta'+E^-)|_{F'})) +\kappa(Y',M)\geq\kappa(Y',M).$$

Now, we consider the question case by case on $n(L)=\dim Y'$. In the case $n(L)=1$, $Y'$ is a curve, and hence $D'$ is ample. This shows that for $t\gg 0$ $$\kappa(X', K_{X'}+\Delta'+E^--f'^*K_{Y'}+ f'^*(K_{Y'}+tD'))\geq \kappa(Y',K_{Y'}+tD')= 1.$$ Therefore, $K_{X'}+\Delta'+E^-+t\pi^*L$ is numerically equivalent to the $\bQ$-effective divisor $K_{X'}+\Delta'+E^--f'^*K_{Y'}+ f'^*(K_{Y'}+tD')$. Note that we have $$K_{X'}+\Delta'+E^-+t\pi^*L+E_1=\pi^*(K_X+\Delta+tL)+E_2$$ for some $\pi$-exceptional effective $\bQ$-divisors $E_1$ and $E_2$. This implies $\pi^*(K_X+\Delta+tL)+E_2+L_0'$ is effective for some numerically trivial $\bQ$-Cartier $\bQ$-divisor $L_0'$. By \cite[Theorem 5.22]{KM}, both $X$ and $X'$ has rational singularities, hence there exists a numerically trivial $\bQ$-Cartier $\bQ$-divisor $L_0$ on $X$ such that $\pi^*L_0=L_0'$. This means $\pi^*(K_X+\Delta+tL+L_0)+E_2$ is effective, which implies $K_X+\Delta+tL+L_0$ is effective, hence $K_X+\Delta+tL\equiv K_X+\Delta+tL+L_0$ is numerically $\bQ$-effective.

In the case $n(L)=2$, since $Y'$ is smooth and $\dim Y'=n(D')=2$, by \cite[Theorem 0.3]{A}, there are three possible cases:
\begin{enumerate}
    \item $D'$ itself is big (hence $K_{Y'}+tD'$ is big for $t\gg 0$);
    \item $D'$ is not big, but $K_{Y'}+tD'$ is big for all $t>2$;
    \item there is a birational morphism $\mu:Y'\rightarrow Y_0$ between smooth projective surfaces such that $sD'\equiv \mu^*(-K_{Y_0})$ for some positive rational number $s$, and $D'$ is algebraically equivalent (hence numerically equivalent) to an effective $\bQ$-divisor $D_e$.
\end{enumerate}
Now, in the first two cases, $K_{Y'}+tD'$ is big for $t\gg 0$, hence we have $$\kappa(X', K_{X'}+\Delta'+E^--f'^*K_{Y'}+ f'^*(K_{Y'}+tD'))\geq\kappa(Y',K_{Y'}+tD')= 2.$$ Therefore, $K_{X'}+\Delta'+E^-+t\pi^*L$ is numerically equivalent to the $\bQ$-effective divisor $K_{X'}+\Delta'+E^--f'^*K_{Y'}+ f'^*(K_{Y'}+tD')=K_{X'}+\Delta'+E^- +tf'^*D'$. Hence by the same argument above, $K_X+\Delta+tL$ is numerically $\bQ$-effective. 

In the last case, we have $K_{Y'}+(-\pi^*K_{Y_0})=G$ is effective because $Y_0$ is smooth. Moreover, we have $$G+(t-s)D_e=K_{Y'}+(-\pi^*K_{Y_0})+(t-s)D_e\equiv K_{Y'}+tD'.$$ Note that $M:=G+(t-s)D_e=K_{Y'}$ is effective if $t\gg 0$. Thus, by \cite[Theorem 1.4]{H} again, we have
$$\kappa(X', K_{X'}+\Delta'+E^--f'^*K_{Y'}+ f'^*M)\geq\kappa(Y',M)\geq 0.$$ Since $f'^*M\equiv f'^*(K_{Y'}+tD')\equiv f'^*K_{Y'}+t\pi^*L$, $K_{X'}+\Delta'+E^-+t\pi^*L$ is numerically equivalent to the effective $\bQ$-divisor $K_{X'}+\Delta'+E^--f'^*K_{Y'}+ f'^*M$, hence by the same argument of above, $K_X+\Delta+tL$ is numerically $\bQ$-effective.

For the last statement, if $t=0$ then the statement is trivial. If $t>0$, then pick $s\gg t>0$, we have $$K_X+\Delta+tL = \frac{t}{s}((\frac{s}{t}-1)(K_X+\Delta)+K_X+\Delta+sL).$$ Since $K_X+\Delta$ is $\bQ$-effective and $K_X+\Delta+sL$ is numerically $\bQ$-effective for $t\gg 0$, hence we are done.
\end{proof}

The following corollary is directly follows from Lemma \ref{nd2}.
\begin{corollary}\label{nd2c}
Let $(X,\Delta)$ be a klt pair of normal projective variety such that $K_X+\Delta$ is pseudoeffective, and $L$ be a nef Cartier divisor on $X$.
\begin{enumerate}
    \item If $\dim =3$ and $0\leq n(L)\leq 2$, then for every $t\geq 0$, $K_X+\Delta+L$ is numerically $\bQ$-effective;
    \item If $\dim =4$  and $1\leq n(L)\leq 2$, then for $t\gg 0$, $K_X+\Delta+tL$ is numerically $\bQ$-effective;
    \item If $\dim =4$, $1\leq n(L)\leq 2$, and $\kappa(K_X+\Delta)\geq 0$, then for every $t\geq 0$, $K_X+\Delta+tL$ is numerically $\bQ$-effective;
    \item If $\dim =5$ and $n(L)=2$, then for $t\gg 0$, $K_X+\Delta+tL$ is numerically $\bQ$-effective;
    \item If $\dim =5$,$n(L)=2$, and $\kappa(K_X+\Delta)\geq 0$, then for every $t\geq 0$, $K_X+\Delta+tL$ is numerically $\bQ$-effective.
\end{enumerate}
\end{corollary}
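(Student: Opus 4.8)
The statement of Corollary \ref{nd2c} is a direct specialization of Lemma \ref{nd2}, so the plan is simply to match hypotheses. The key observation is that Lemma \ref{nd2} requires three things: a klt pair $(X,\Delta)$, a nef $\bQ$-Cartier divisor $L$ with $n(L)\leq 2$, and the condition $\kappa(F,K_F+\Delta_F)\geq 0$ on the general fiber $F$ of the nef reduction map, subject to $\dim F\leq 3$. Since $L$ is Cartier and $K_X+\Delta$ is pseudoeffective in all cases here, the only work is to (i) check $\dim F\leq 3$ in each numerical regime, and (ii) verify the Kodaira dimension hypothesis on the fiber, or else fall back on the ``in particular'' clause of Lemma \ref{nd2} when $\kappa(K_X+\Delta)\geq 0$ is assumed outright.

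First I would handle the fiber dimension bookkeeping. If $f:X\dashrightarrow Y$ is the nef reduction of $L$, then the general fiber $F$ has $\dim F=\dim X-n(L)$. So for $\dim X=3$ and $0\leq n(L)\leq 2$ we get $\dim F\leq 3$; for $\dim X=4$ and $n(L)\geq 1$ we get $\dim F\leq 3$; and for $\dim X=5$ and $n(L)=2$ we get $\dim F=3$. Thus in every listed case the hypothesis $\dim F\leq 3$ of Lemma \ref{nd2} is satisfied, and the cases $n(L)=0$ (only possible when $\dim X=3$, giving $L\equiv 0$) are trivial as noted there.

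Next I would split on whether we are told $\kappa(K_X+\Delta)\geq 0$. In parts (1), (3), and (5) this is available — part (1) because $\dim X=3$, $K_X+\Delta$ pseudoeffective, and threefold non-vanishing (abundance for threefolds, via \cite[Theorem 5.22]{KM} type results) gives $\kappa(K_X+\Delta)\geq 0$; parts (3) and (5) by hypothesis. In these cases the ``in particular'' clause of Lemma \ref{nd2} applies directly and yields that $K_X+\Delta+tL$ is numerically $\bQ$-effective for \emph{every} $t\geq 0$. For parts (2) and (4), where we only know pseudoeffectivity of $K_X+\Delta$, I cannot control $\kappa(F,K_F+\Delta_F)$ abstractly; instead I would invoke the fact that $\dim F\leq 3$ together with nonvanishing for log canonical pairs of dimension $\leq 3$ (again abundance in dimension $\leq 3$) to conclude $\kappa(F,K_F+\Delta_F)\geq 0$ from the pseudoeffectivity of $(K_X+\Delta)|_F$ — note $(K_X+\Delta)|_F=K_F+\Delta_F$ is pseudoeffective because $K_X+\Delta$ is and $F$ is a general fiber. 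Then the first part of Lemma \ref{nd2} gives numerical $\bQ$-effectivity of $K_X+\Delta+tL$ for $t\gg 0$.

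The main subtlety — and the only place where care is needed — is justifying $\kappa(F,K_F+\Delta_F)\geq 0$ in parts (2) and (4): one needs that the pseudoeffective log canonical divisor $K_F+\Delta_F$ on the (at most threefold) general fiber is actually $\bQ$-effective. This is exactly the statement that nonvanishing holds in dimension $\leq 3$, which is known; strictly one should pass to a klt model or dlt modification of $(F,\Delta_F)$ first, but since $F$ is a general fiber of the nef reduction and $(X,\Delta)$ is klt, the restriction $(F,\Delta_F)$ is itself klt, so no modification is needed. Once this is in hand, every item of the corollary is an immediate application of Lemma \ref{nd2}, and I would simply write out the five cases in a short paragraph each referencing the appropriate branch.
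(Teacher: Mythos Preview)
Your proposal is correct and matches the paper's approach: the paper simply asserts that Corollary \ref{nd2c} ``directly follows from Lemma \ref{nd2}'', and your proof is a faithful unpacking of that claim, checking the fiber-dimension bound $\dim F=\dim X-n(L)\leq 3$ in each case and invoking nonvanishing in dimension $\leq 3$ (on $X$ itself for part (1), on the general fiber $F$ for parts (2) and (4)) to supply the hypothesis $\kappa(F,K_F+\Delta_F)\geq 0$. One small cosmetic point: the reference \cite[Theorem 5.22]{KM} you cite is the statement that klt singularities are rational, not threefold nonvanishing; you should instead point to the standard nonvanishing/abundance results for klt threefolds.
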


\begin{remark}
    Our proof of Lemma \ref{nd2} differs from the proof in \cite{C23}, although both rely on the nef reduction. The difference is as the following: the proof in \cite{C23} use the nef reduction of $K_X+B+L$, which implies $(K_X+B)|_F\equiv 0 \equiv L|_F$ if $F$ is a general fibre of the nef reduction. Hence it can be shown that a good birational modification of the nef reduction concerning $K_X+B+L$ is a klt-trivial fibration. Therefore, by employing the theory of klt-trivial fibrations and Zariski decomposition, one can deduce $K_X+B+L$ is numerically semiample. 
    
    However, in our proof $K_X+B+L$ may not be nef, and we are using the nef reduction of $L$, which means we cannot control the restriction of $K_X+B$ on the general fibre $F$. Also, although by \cite[Theorem 4.1]{LP}, one can take a birational modification $\pi:X\dashrightarrow X'$ to make $K_{X'}+\pi_*\Delta+t\pi_*L$ be nef for $t\gg 0$, in this case, we cannot control the nef dimension of $K_{X'}+\pi_*\Delta+t\pi_*L$. 
\end{remark}

\section{The Generalized Nonvanishing Conjecture for nef divisors with maximal nef dimension via the Albanese map}

In this section, we will prove the Generalized Nonvanishing Conjecture holds for nef divisor with maximal nef dimension if the relative dimension of the Albanese map is at most 2 over its image. This holds, in particular, when $\dim X=3$ with $q>0$. Before we start the proof, we give several lemmas proved by standard methods in the minimal model program.

\begin{lemma}\label{mor}
Let $(X,\Delta)$ be a klt pair of normal projective variety, $f:X'\rightarrow X$ be a log resolution of $(X,\Delta)$, and $\Delta'$ be the proper transform of $\Delta$ on $X'$. Write $$K_{X'}+\Delta'+E^-=f^*(K_X+\Delta)+E^+$$ for some $f$-exceptional effective $\bQ$-divisors $E^+,E^-$ such that $\Supp E^+$ and $\Supp E^-$ has no common components. Let $L$ be a $\bQ$-Cartier divisor on $X$. Suppose $K_{X'}+\Delta'+E^-+f^*L$ is $\bQ$-effective (resp. numerically $\bQ$-effective), then $K_X+\Delta+L$ is $\bQ$-effective (resp. numerically $\bQ$-effective). In particular, since $(X',\Delta'+E^-)$ is a smooth klt pair with $\kappa(K_{X'}+\Delta'+E^-)=\kappa(K_X+\Delta)$, to prove $N_{d,k,q,n}$, it suffices to show $N_{d,k,q,n}$ holds for smooth pair $(X,\Delta)$.
\end{lemma}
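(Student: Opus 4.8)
The plan is to push everything down along the birational morphism $f$, turning assertions about divisors on $X'$ into assertions about divisors on $X$, and then to read off the ``in particular'' clause from standard birational-invariance facts. The starting point is the equality of $\bQ$-divisors
$$ K_{X'}+\Delta'+E^- + f^*L \;=\; f^*(K_X+\Delta+L) + E^+, $$
obtained from the defining relation by adding $f^*L$; here $E^+$ is effective and $f$-exceptional, so $f_*E^+ = 0$ and $f(\Supp E^+)$ has codimension $\geq 2$ in $X$. I will use two standard facts about the birational morphism $f$ onto the normal variety $X$: first, $f_*\cO_{X'} = \cO_X$; second, for an effective $f$-exceptional $\bQ$-divisor $E^+$ one has $f_*\cO_{X'}(mE^+) = \cO_X$ for every $m > 0$, since a rational function on an open subset of $X'$ whose only poles lie along $E^+$ extends across the image of $E^+$ by normality of $X$.

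For the $\bQ$-effective case, pick $m > 0$ divisible enough that $m(K_X+\Delta+L)$ is Cartier and $mE^\pm$ are integral. The projection formula then gives
$$ f_*\cO_{X'}\bigl(m(K_{X'}+\Delta'+E^-+f^*L)\bigr) \;=\; \cO_X\bigl(m(K_X+\Delta+L)\bigr) \otimes f_*\cO_{X'}(mE^+) \;=\; \cO_X\bigl(m(K_X+\Delta+L)\bigr), $$
so that $H^0\bigl(X', m(K_{X'}+\Delta'+E^-+f^*L)\bigr) = H^0\bigl(X, m(K_X+\Delta+L)\bigr)$, and in fact the section rings agree after clearing denominators. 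Hence $\bQ$-effectivity of $K_{X'}+\Delta'+E^-+f^*L$ implies $\bQ$-effectivity of $K_X+\Delta+L$; specializing to $L = 0$ yields $\kappa(K_{X'}+\Delta'+E^-) = \kappa(K_X+\Delta)$. For the numerical case, suppose $K_{X'}+\Delta'+E^-+f^*L \equiv D'$ for an effective $\bQ$-divisor $D'$. Subtracting $E^+$ from the displayed identity gives $f^*(K_X+\Delta+L) \equiv D' - E^+$. Applying $f_*$ to this numerical equivalence of cycles — pushforward of cycles along a proper morphism descends to numerical equivalence classes, $f_* f^*(K_X+\Delta+L) = K_X+\Delta+L$ since $f$ is birational, and $f_*E^+ = 0$ since $E^+$ is $f$-exceptional — we get $K_X+\Delta+L \equiv f_*D'$, and $f_*D'$ is an effective $\bQ$-divisor, which is exactly the conclusion (the paper's notion of numerical $\bQ$-effectivity only requires a numerically equivalent effective divisor, not a $\bQ$-Cartier one).

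Finally, for the ``in particular'' clause I would verify that $(X',\Delta'+E^-)$ is a smooth klt pair: $X'$ is smooth and $\Supp(\Delta'+E^-)$ is simple normal crossing because $f$ is a log resolution of $(X,\Delta)$, while every coefficient of $\Delta'$ is a coefficient of $\Delta$ (hence $<1$) and every coefficient of $E^-$ is $-a_i<1$ for a discrepancy $a_i\in(-1,0)$ (klt), and $\Delta'$ and $E^-$ have no common components. We have $\kappa(K_{X'}+\Delta'+E^-) = \kappa(K_X+\Delta) = k$ by the computation above; $q(X') = q(X) = q$ because $X$ has rational singularities (being klt), so $f_*\cO_{X'} = \cO_X$ and $R^{>0}f_*\cO_{X'} = 0$, forcing $h^1(\cO_{X'}) = h^1(\cO_X)$; $n(f^*L) = n(L) = n$ by the cited invariance of nef dimension; $f^*L$ is nef; and $K_{X'}+\Delta'+E^- \equiv f^*(K_X+\Delta)+E^+$ is pseudo-effective. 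Thus the instance $N_{d,k,q,n}$ for $(X,\Delta,L)$ follows by applying the main part of the lemma, with $L$ replaced by $tL$, to the instance of $N_{d,k,q,n}$ for the smooth pair $(X',\Delta'+E^-, f^*L)$ for each rational $t \geq 0$. The one point that needs genuine care is the numerical case: one must be comfortable that cycle pushforward respects numerical equivalence and that the resulting effective representative — possibly not $\bQ$-Cartier — still witnesses numerical $\bQ$-effectivity in the sense used here; the $\bQ$-effective case, by contrast, is a clean cohomological identity.
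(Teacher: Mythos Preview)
Your treatment of the $\bQ$-effective case is correct and is essentially the paper's argument with the cohomological identity $f_*\cO_{X'}(mE^+)=\cO_X$ spelled out; the ``in particular'' clause is also handled correctly.

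The numerical case, however, has a genuine gap --- one you flag yourself but do not close. Pushing forward $f^*(K_X+\Delta+L)\equiv D'-E^+$ along $f$ gives $K_X+\Delta+L\equiv_{\mathrm{cycle}} f_*D'$ in $N_{\dim X-1}(X)_\bQ$, but $f_*D'$ need not be $\bQ$-Cartier when $X$ is not $\bQ$-factorial. Your parenthetical defense (that numerical $\bQ$-effectivity ``only requires a numerically equivalent effective divisor, not a $\bQ$-Cartier one'') does not match the target application: the Generalized Nonvanishing Conjecture, as stated in the paper, explicitly asks for an effective $\bQ$-Cartier representative, and cycle-numerical equivalence between a $\bQ$-Cartier class and a non-$\bQ$-Cartier effective Weil divisor does not by itself place the former in the effective cone inside $N^1(X)_\bQ$.

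The paper closes this gap by a different and cleaner route. Instead of pushing $D'$ forward, it writes the numerical difference as a numerically trivial $\bQ$-Cartier divisor $L_0'$ on $X'$, and then uses that $X$ (being klt) has rational singularities to descend $L_0'$ to a numerically trivial $\bQ$-Cartier $L_0$ on $X$ with $f^*L_0=L_0'$. Then $f^*(K_X+\Delta+L+L_0)+E^+$ is honestly $\bQ$-effective, so the already-established $\bQ$-effective case yields $\kappa(K_X+\Delta+L+L_0)\geq 0$; hence $K_X+\Delta+L\equiv K_X+\Delta+L+L_0$ is numerically $\bQ$-effective with a genuine $\bQ$-Cartier witness. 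The rational-singularities descent of $L_0'$ is exactly the missing ingredient your pushforward argument needs.
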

\begin{proof}
We have $$ K_{X'}+\Delta'+E^-+f^*L= f^*(K_X+\Delta+L)+E^+. $$ If $K_{X'}+\Delta'+E^-+f^*L= f^*(K_X+\Delta+L)+E^+$ is $\bQ$-effective, then $K_X+\Delta+L$ is also $\bQ$-effective since $E^+$ is $f$-exceptional. If $K_{X'}+\Delta'+E^-+f^*L$ is numerically $\bQ$-effective, there exists a numerically trivial $\bQ$-divisor $L_0'$ on $X'$ such that $f^*(K_X+\Delta+L)+E^++L_0'$ is an effective $\bQ$-Cartier divisor. Again, since both $X$ and $X'$ have rational singularities, there exists a numerically trivial $\bQ$-divisor on $L_0$ on $X$ such that $f^*L_0=L_0'$. Thus,  $f^*(K_X+\Delta+L+L_0)+E^+=f^*(K_X+\Delta+L)+E^++L_0'$ has non-negative Kodaira dimension, and so does $K_X+\Delta+L+L_0$. Hence $K_X+\Delta+L\equiv K_X+\Delta+L+L_0$ is numerically $\bQ$-effective.
\end{proof}


\begin{lemma}\label{bigc}
    Let $(X,\Delta)$ be a $\bQ$-factorial klt pair, $R$ be a $(K_X+\Delta)$-negative extremal ray of $X$, such that the contraction for $R$ is not of fiber type. Assume $(K_X+\Delta)$-flip exists. Let $\pi:X\dashrightarrow Y$ be the corresponding divisorial contraction or $(K_X+\Delta)$-flip. Let $L$ be a $\bQ$-Cartier $\bQ$-divisor on $X$ such that $L.R\leq 0$, and $t$ is a non-negative rational number, then we have $$\kappa(K_X+\Delta+tL)=\kappa(K_Y+\pi_*\Delta+t\pi_*L).$$ 
    In particular, if $K_Y+\pi_*\Delta+t\pi_*L$ is big, then so does $K_X+\Delta+tL$.
\end{lemma}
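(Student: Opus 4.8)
The plan is to handle the two possibilities for $\pi$ — a divisorial contraction, or a flip — separately, and in each case to reduce the asserted equality of Iitaka dimensions to the elementary fact, already used in Lemma~\ref{mor}, that $\kappa$ is unchanged when a $\bQ$-Cartier divisor $D_Y$ on $Y$ is replaced by $g^*D_Y+E$ for a birational contraction $g\colon X\to Y$ and an effective $g$-exceptional $\bQ$-divisor $E$. In both cases $Y$ is again $\bQ$-factorial (being the output of a divisorial contraction, resp.\ a flip, of a $\bQ$-factorial klt pair), so that $\pi_*\Delta$, $\pi_*L$ and $K_Y+\pi_*\Delta+t\pi_*L$ are $\bQ$-Cartier and the statement makes sense.

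Suppose first $\pi\colon X\to Y$ is the divisorial contraction of $R$, with irreducible exceptional divisor $E$. After choosing canonical divisors compatibly ($K_Y=\pi_*K_X$), the difference $K_X+\Delta+tL-\pi^*(K_Y+\pi_*\Delta+t\pi_*L)$ is a $\bQ$-Cartier $\bQ$-divisor that is $\pi$-exceptional, hence equals $cE$ for some $c\in\bQ$; that is,
\[
K_X+\Delta+tL=\pi^*\bigl(K_Y+\pi_*\Delta+t\pi_*L\bigr)+cE .
\]
Intersecting with a curve $C$ spanning $R$ and using $\pi^*(K_Y+\pi_*\Delta+t\pi_*L)\cdot C=0$ gives $c\,(E\cdot C)=(K_X+\Delta)\cdot C+t\,(L\cdot C)$; the right-hand side is negative since $R$ is $(K_X+\Delta)$-negative, $L\cdot R\le 0$ and $t\ge 0$, while $E\cdot C<0$ by the standard description of a divisorial extremal contraction (cf.\ \cite{KM}). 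Hence $c>0$, $cE$ is effective and $\pi$-exceptional, and the fact recalled above yields $\kappa(K_X+\Delta+tL)=\kappa(K_Y+\pi_*\Delta+t\pi_*L)$.

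Suppose instead $\pi\colon X\dashrightarrow Y$ is the $(K_X+\Delta)$-flip of a flipping contraction $\phi\colon X\to Z$. Then $\pi$ is an isomorphism in codimension one, so it restricts to an isomorphism between open sets $U\subseteq X$ and $V\subseteq Y$ whose complements have codimension $\ge 2$, under which $\lfloor m(K_X+\Delta+tL)\rfloor|_U$ corresponds to $\lfloor m(K_Y+\pi_*\Delta+t\pi_*L)\rfloor|_V$ for every $m$. Since $X$ and $Y$ are normal, Hartogs' extension identifies the spaces of global sections $H^0(X,\lfloor m(K_X+\Delta+tL)\rfloor)\cong H^0(Y,\lfloor m(K_Y+\pi_*\Delta+t\pi_*L)\rfloor)$, so the two Iitaka dimensions coincide (note that here $L\cdot R\le 0$ is not needed). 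Finally, the ``in particular'' follows at once: $\pi$ is birational, so $\dim X=\dim Y$, and if $K_Y+\pi_*\Delta+t\pi_*L$ is big — equivalently has Iitaka dimension $\dim Y$ — then $\kappa(K_X+\Delta+tL)=\dim X$, i.e.\ $K_X+\Delta+tL$ is big.

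I expect the only genuinely delicate point to be in the divisorial case: one must first arrange the displayed relation as an honest equality of divisors (not merely a numerical equivalence), and then the sign computation $c>0$ is precisely where all the hypotheses $(K_X+\Delta)\cdot R<0$, $L\cdot R\le 0$ and $t\ge 0$ (together with $E\cdot C<0$) enter. The flip case and the deduction of bigness are routine.
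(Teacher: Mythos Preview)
Your proof is correct and follows essentially the same approach as the paper: both dispose of the flip case by the isomorphism in codimension one, and in the divisorial case both show the difference $K_X+\Delta+tL-\pi^*(K_Y+\pi_*\Delta+t\pi_*L)$ is effective $\pi$-exceptional. The only cosmetic difference is that the paper invokes the negativity lemma on this exceptional $\bQ$-divisor, whereas you use the irreducibility of the exceptional divisor to write it as $cE$ and compute the sign of $c$ directly via $E\cdot C<0$; these are the same argument.
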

\begin{proof}
    Note that $(Y,\pi_*\Delta)$ is also klt $\bQ$-factorial. If $\pi$ is a flip then $X$ and $Y$ are isomorphic in codimension 2, hence the equality is trivial. Therefore, we may assume $\pi$ is a divisorial contraction. In this case, we can write $\pi^*\pi_*L+E_L= L$ for some $\pi$-excpetional $\bQ$-divisor $E_L$. Since $L.R\leq 0$, $E_L$ is effective by the negativity lemma. Let $\Delta=\Delta_1+\Delta_2$ such that both $\Delta_1, \Delta_2$ are effective, $\Supp\Delta_1$ and $\Supp\Delta_2$ have no common component, every component of $\Delta_1$ is not contracted by $\pi$, and $\Delta_2$ is $\pi$-exceptional. Then $\Delta_1$ is the proper transform of $\pi_*\Delta$ on $X$, and we have 
    \begin{align*}
        K_X+\Delta+tL &= K_X+\Delta_1+tL+\Delta_2\\
        &= \pi^*(K_X+\pi_*\Delta)+E+tL+\Delta_2\\
        &= \pi^*(K_Y+\pi_*\Delta+t\pi_*L) + (E+tE_L+\Delta_2)
    \end{align*}
    for some $\pi$-exceptional $\bQ$-divisor $E$. Note that $E$ itself may not be effective, but we have $(E+tE_L+\Delta_2).R=(K_X+\Delta+tL).R<0.$ Since $\Delta_2$ is also $\pi$-exceptional, by the negativity lemma again we conclude that $tE_L+E+\Delta_2$ is effective. Hence the equality $$\kappa(K_X+\Delta+tL)=\kappa(K_Y+\pi_*\Delta+t\pi_*L)$$ holds.
\end{proof}

\begin{corollary}\label{sfb}
    Let $(X,\Delta)$ be a 2-dimensional klt pair such that $\kappa(K_X+\Delta)\geq 0$, and $L$ be a nef $\bQ$-Cartier $\bQ$-divisor on $X$ with $n(L)=2$, then $K_X+\Delta+tL$ is big for $t\gg 0$.
\end{corollary}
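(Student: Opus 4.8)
I would first reduce to the case $X$ smooth, and then feed $L$ into Ambro's classification of nef divisors of maximal nef dimension on a surface (\cite[Theorem 0.3]{A}).

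\textbf{Reduction to the smooth case and the easy cases.} Exactly as in the last sentence of Lemma \ref{mor}, take a log resolution $\phi\colon X'\to X$ with $K_{X'}+\Delta'+E^-=\phi^*(K_X+\Delta)+E^+$, $E^\pm$ effective $\phi$-exceptional without common components; then $\kappa(K_X+\Delta+tL)=\kappa(K_{X'}+\Delta'+E^-+t\phi^*L)$ for every $t$, $n(\phi^*L)=n(L)=2$, $\kappa(K_{X'}+\Delta'+E^-)=\kappa(K_X+\Delta)\ge 0$, and $(X',\Delta'+E^-)$ is a smooth klt pair. Since bigness of a divisor on a surface is just the condition ``Iitaka dimension $=2$'', it suffices to prove the statement when $X$ is smooth. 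On a smooth $X$ I would apply \cite[Theorem 0.3]{A} to $L$. In case $(1)$, $L$ is big, so $K_X+\Delta+tL=(K_X+\Delta)+tL$ is pseudoeffective plus big, hence big for every $t>0$. In case $(2)$, $K_X+tL$ is big for $t>2$, hence so is $K_X+\Delta+tL=(K_X+tL)+\Delta$. So the only case with content is case $(3)$; and if $L$ happens to be big there we are done by the argument for case $(1)$, so assume $L$ is not big. Then there are a birational morphism $\mu\colon X\to Y$ of smooth projective surfaces and a rational $0<s\le 2$ with $sL\equiv\mu^*(-K_Y)$, and $L$ is numerically equivalent to an effective $\bQ$-divisor.

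\textbf{Reducing case $(3)$ to a positivity statement.} Writing $K_X=\mu^*K_Y+E$ with $E\ge 0$ $\mu$-exceptional, and using $\mu^*K_Y\equiv -sL$, one gets $K_X+\Delta+tL\equiv (t-s)L+E+\Delta$, a class which for $t>s$ is numerically equivalent to an effective divisor. Since $L$ is numerically a pull-back from $Y$ we have $L\cdot E=0$, and $L^2=0$ because $L$ is not big; hence the self-intersection of this class is $2(t-s)(L\cdot\Delta)+(E+\Delta)^2$. Therefore, if $L\cdot\Delta>0$, then for $t\gg 0$ it is a class numerically equivalent to an effective divisor of positive self-intersection on a surface, hence big (Zariski decomposition), and we are done. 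So the whole problem reduces to excluding $L\cdot\Delta=0$.

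\textbf{Excluding $L\cdot\Delta=0$.} Suppose $L\cdot\Delta=0$. Using $(-K_Y)^2=s^2L^2=0$ one computes $(K_X+\Delta)\cdot L=0$; as $K_X+\Delta$ is $\bQ$-effective by hypothesis, both $\Delta$ and an effective $D_0\sim_{\bQ}K_X+\Delta$ are supported on the $L$-trivial curves of $X$ (finitely many, since $n(L)=2$), which — because $L=\tfrac1s\mu^*(-K_Y)$ — are precisely the $\mu$-exceptional curves together with the strict transforms of the $(-K_Y)$-trivial curves of $Y$. Here $-K_Y$ is nef, $(-K_Y)^2=0$, $-K_Y\not\equiv 0$, so $\kappa(Y)=-\infty$ and $-K_Y$ is numerically equivalent to a nonzero effective divisor supported on its trivial curves. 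By the classification of surfaces with $\kappa=-\infty$, $Y$ is rational or birational to a $\bP^1$-bundle over an elliptic curve (blown up so that $(-K_Y)^2=0$): in the rational case $-K_Y$ is even linearly equivalent to an effective divisor (since $\Pic^0(Y)=0$), so some $(-K_Y)$-trivial curve occurs in it with integer coefficient $\ge 1$; in the elliptic case $-K_Y$ is numerically $2C_0$ plus a numerically trivial class, so the (strict transform of the) minimal self-intersection section occurs with coefficient $2$. Let $\Gamma$ be such a curve and $\widehat\Gamma$ its strict transform in $X$. Writing classes in a basis of $N^1(X)$ adapted to the $L$-trivial curves, the coefficient of $[\widehat\Gamma]$ in $[K_X]$ is $\le -1$, its coefficient in $[\Delta]$ is $<1$ because $\Delta$ is a boundary, hence its coefficient in $[D_0]=[K_X+\Delta]$ is $<0$, contradicting $D_0\ge 0$. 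Thus $L\cdot\Delta>0$, and case $(3)$ — hence the corollary — follows.

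\textbf{Where the difficulty lies.} The crux is the last step: ruling out $L\cdot\Delta=0$ in Ambro's case $(3)$ is exactly where the boundary hypothesis on $\Delta$ and the structure of surfaces with $-K$ nef and $K^2=0$ get used. The technical care needed is (i) pinning down a $(-K_Y)$-trivial curve occurring in $-K_Y$ with coefficient $\ge 1$, which requires that classification and, when $Y$ is not minimal, a preliminary reduction (one may run the minimal model program on $Y$, which preserves $-K_Y$ nef, or argue directly with strict transforms), and (ii) the bookkeeping of coefficients in $N^1(X)$ with respect to a basis adapted to the (finite, because $n(L)=2$) set of $L$-trivial curves. Everything else — the reduction to the smooth case and cases $(1)$ and $(2)$ — is routine.
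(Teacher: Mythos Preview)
Your approach is genuinely different from the paper's: the paper never invokes Ambro's trichotomy here but instead runs an \emph{$L$-trivial} $(K_X+\Delta)$-MMP (contracting only extremal rays $R$ with $L\cdot R=0$), reaches a model on which $K+\Delta+tL$ is nef for $t\gg0$ with nef dimension $2$, applies Generalized Abundance for surfaces \cite[Corollary~C]{LP} to get numerical semiampleness and hence bigness, and then pulls bigness back along each step using Lemma~\ref{bigc}. Your reduction to the smooth case and your treatment of Ambro's cases (1) and (2) are fine, and the computation $K_X+\Delta+tL\equiv(t-s)L+E+\Delta$ together with the ``effective with positive self-intersection $\Rightarrow$ big'' argument for $L\cdot\Delta>0$ is correct.

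The gap is in your exclusion of $L\cdot\Delta=0$. The sentence ``writing classes in a basis of $N^1(X)$ adapted to the $L$-trivial curves, the coefficient of $[\widehat\Gamma]$ in $[K_X]$ is $\le-1$, its coefficient in $[\Delta]$ is $<1$\ldots'' conflates two different notions of ``coefficient'': the multiplicity of a prime divisor in a \emph{specific} $\bQ$-divisor, versus the coordinate of a \emph{numerical class} in some basis of $N^1(X)_{\bR}$. The klt hypothesis controls the former, but your contradiction is phrased in terms of the latter, and these do not agree. Concretely, in your elliptic-ruled case the two disjoint sections $C_0,C_1$ satisfy $[C_0]=[C_1]$ in $N^1(Y)$; then $\Delta=0.9\,C_0+0.9\,C_1$ is a legitimate klt boundary with every divisorial coefficient $<1$, yet $[\Delta]=1.8\,[C_0]$ has ``coefficient of $[C_0]$'' equal to $1.8$, so your inequality fails. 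Even in the rational case you have not shown that the finitely many $L$-trivial curves are linearly independent in $N^1(X)$, nor that a $\bQ$-linear equivalence $D_0\sim_{\bQ}K_X+\Delta$ forces agreement of multiplicities along $\widehat\Gamma$; nontrivial relations $\sum c_i\Gamma_i\sim_{\bQ}0$ among the $(-K_Y)$-trivial curves would invalidate the coefficient comparison. The statement you want (that $L\cdot\Delta=0$ is incompatible with $\kappa(K_X+\Delta)\ge0$ in Ambro's case (3)) may well be salvageable---for instance by pushing forward to $Y$, using that $K_Y+\mu_*\Delta$ is $\bQ$-effective and supported on $(-K_Y)$-trivial curves, and exploiting the Hodge-index fact that the intersection form on $(-K_Y)^\perp$ is negative semidefinite with radical $\bR[-K_Y]$---but as written the bookkeeping step is not a proof, and your ``where the difficulty lies'' paragraph already flags exactly this as the delicate point without resolving it. The paper's MMP argument sidesteps all of this structure theory.
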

\begin{proof}
    First, we consider the special case that there is no $(K_X+\Delta)$-negative extremal face $R$ of $X$ with $L.R=0$. In this case, we have $K_X+\Delta+tL$ is nef for $t\gg 0$ with $n(K_X+\Delta+tL)=2$, so by \cite[Corollary C]{LP}, $K_X+\Delta+tL \equiv M$ for some semiample $\bQ$-divisor $M$ with $n(M)=2$. Since the nef dimension of a semiample $\bQ$-divisor is equivalent to the Kodaira dimension, which implies $M$ is big, hence so does $K_X+\Delta+tL$.
    
    In general, we will construct a $L$-trivial $(K_X+\Delta)$-MMP $X\rightarrow Y$. More precisely, let $R$ be a $(K_X+\Delta)$-negative extremal ray with $L.R=0$, and $\pi_1:X\rightarrow X_1$ be the corresponding contraction. Note that $\pi_1^*(\pi_1)_*L\equiv L$ and hence $(\pi_1)_*L$ is nef with $n((\pi_1)_*L)=n(L)=2$ (here $(\pi_1)_*L$ is $\bQ$-Cartier since 2-dimensional klt pair is always $\bQ$-factorial). If there is no $(K_{X_1}+(\pi_1)_*\Delta)$-negative extremal ray $R_1$ with $(\pi_1)_*L.R_1=0$, then we let $Y=X_1$, otherwise, we let $R_1$ be a such extremal ray and $\pi_2: X_1 \rightarrow X_2$ be the corresponding contraction. Then we consider the same process on $X_2$ again. Inductively, we obtained a sequence $$X=X_0\xrightarrow{\pi_1} X_1 \xrightarrow{\pi_2} X_2\xrightarrow{\pi_3}...\xrightarrow{\pi_n} X_n=Y$$ such that there is no $(K_{X_n}+(\phi_n)_*\Delta)$-negative extremal ray $R_n$ with $(\phi_n)_*L.R_n=0$ (here we define $\phi_k:=\pi_k\circ\pi_{k-1}\circ...\circ\pi_1: X\rightarrow X_k$), which implies $K_{X_n}+(\phi_n)_*\Delta+t(\phi_n)_*L$ is big. By Lemma \ref{bigc}, we conclude $K_{X_{n-1}}+(\phi_{n-1})_*\Delta+t(\phi_{n-1})_*L$ is also big. Inductively, since each $\pi_i$ is a $(K_{X_{i-1}}+(\phi_{i-1})_*\Delta)$-negative extremal contraction such that $(\phi_{i-1})_*L$ is $\pi_i$-trivial, by applying Lemma \ref{bigc} on each $\pi_i$ inductively, we conclude that $K_X+\Delta+tL$ is big.
\end{proof}

Now we can prove the following proposition:

\begin{proposition}\label{alb}
Let $(X,\Delta)$ be klt pair of a normal projective variety such that $K_X+\Delta$ is pseudoeffecitve and $q(X)>0$. Let $\alpha: X\rightarrow A(X)$ be the Albanese map and $F$ be a general fibre of $\alpha$. If $\dim F\leq 2$, then for any nef $\bQ$-Cartier divisor $L$ with maximal nef dimension, there is a numerically trivial $\bQ$-Cartier divisor $D$ on $A$ such that $\kappa(K_X+\Delta+tL+\alpha^*D)\geq 0$ for $t\gg 0$. In particular, if $\kappa(K_X+\Delta)\geq 0$, then for every $t\geq 0$, $K_X+\Delta+tL$ is numerically $\bQ$-effective.
\end{proposition}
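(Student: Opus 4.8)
The plan is to reduce to the smooth case, read off information about $L$ and $K_X+\Delta$ on the general fibre of the Albanese map, and then lift the resulting non-vanishing from the base — a subvariety of the abelian variety $A$ — back to $X$. First I would use Lemma~\ref{mor} to assume $(X,\Delta)$ smooth and set $Z:=\alpha(X)\subseteq A$. Since $q(X)>0$ we have $\dim A\ge1$, hence $\dim Z=\dim X-\dim F\ge\dim X-2\ge1$. After passing to a resolution $Z'\to Z$ (and the Stein factorisation of $\alpha$ if needed) and a birational model $\pi:X'\to X$ resolving $X\dashrightarrow Z'$, I obtain an algebraic fibre space $f:X'\to Z'$ whose general fibre is birational to $F$; crucially $Z'$ carries a generically finite morphism to $A$, so it has maximal Albanese dimension and therefore $\kappa(Z')\ge0$.

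Two facts about the general fibre then feed into the argument. First, $L|_F$ is nef with $n(L|_F)=\dim F$: since $n(L)=\dim X$, the nef reduction of $L$ is birational, so there is a countable union of proper closed subsets of $X$ off which every curve meets $L$ positively, and restricting to a sufficiently general $F$ forces $n(L|_F)=\dim F$. Second, $K_X+\Delta+tL$ is $\alpha$-big for $t\gg0$, i.e.\ $(K_X+\Delta+tL)|_F=K_F+\Delta_F+tL|_F$ is big. For $\dim F=0$ this is vacuous; for $\dim F=1$ the divisor $L|_F$ is ample, so $\deg(K_F+\Delta_F+tL|_F)\to\infty$; for $\dim F=2$ the restriction $K_F+\Delta_F=(K_X+\Delta)|_F$ of the pseudo-effective divisor $K_X+\Delta$ to a general fibre is again pseudo-effective (restriction to a general member of a covering family), so $\kappa(K_F+\Delta_F)\ge0$ by log non-vanishing for klt surfaces, and then Corollary~\ref{sfb} — which is precisely the statement that uses $n(L|_F)=2$ — yields bigness of $K_F+\Delta_F+tL|_F$ for $t\gg0$.

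Now $K_X+\Delta+tL$ is pseudo-effective ($K_X+\Delta$ pseudo-effective plus $tL$ nef) and $\alpha$-big, and the heart of the proof is to lift $\kappa\ge0$ from the fibres to $X$ over the abelian variety $A$. Since $K_X+\Delta+tL$ is $\alpha$-big and, on the model $X'\to Z'$ (in the notation of Lemma~\ref{mor}), the divisor $(K_{X'}+\Delta'+E^-+t\pi^*L)-K_{X'/Z'}=\Delta'+E^-+t\pi^*L+f^*K_{Z'}$ is pseudo-effective (effective plus nef plus $f^*K_{Z'}$, and $K_{Z'}$ is pseudo-effective because $\kappa(Z')\ge0$), Viehweg-type weak positivity gives $\kappa(X,K_X+\Delta+tL)\ge\dim F+\kappa(Z',N_t)$ for a pseudo-effective $\bQ$-divisor $N_t$ on $Z'$. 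It then remains to make $N_t$ effective up to numerical equivalence, and here the base being a resolution of a subvariety of $A$ is essential: a non-vanishing statement over varieties of maximal Albanese dimension provides a numerically trivial $\bQ$-divisor, pulled back from $\Pic^0(A(Z'))=\Pic^0(A)$, with $\kappa\bigl(Z',N_t+(\text{this class})\bigr)\ge0$. Pulling it back appropriately to a numerically trivial $\bQ$-divisor $D$ on $A$ and tracking everything through $\pi$ via Lemma~\ref{mor} yields $\kappa(X,K_X+\Delta+tL+\alpha^*D)\ge0$ for $t\gg0$. For the final assertion, if $\kappa(K_X+\Delta)\ge0$ then, since $\alpha^*D$ is numerically trivial, $K_X+\Delta+tL$ is numerically $\bQ$-effective for $t\gg0$; for arbitrary $t\ge0$ I pick $s\gg t$ and write
\[
K_X+\Delta+tL=\tfrac{t}{s}\Big(\big(\tfrac{s}{t}-1\big)(K_X+\Delta)+(K_X+\Delta+sL)\Big),
\]
a convex combination of a $\bQ$-effective and a numerically $\bQ$-effective divisor, hence numerically $\bQ$-effective, as at the end of the proof of Lemma~\ref{nd2}.

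The step I expect to be the main obstacle is the transport over the abelian variety: one needs the non-vanishing on $Z'$ in a form that applies to the auxiliary divisor $N_t$ — a genuinely ``generalized'' object rather than the adjoint divisor of a klt pair on $Z'$ — and one must work over the possibly singular base $Z\subsetneq A$; the unavoidable $\Pic^0$-ambiguity in any such statement is exactly what forces the correction term $\alpha^*D$ into the conclusion rather than allowing $D=0$. A secondary point to watch is that the surface case of the fibre analysis really does rely on $n(L|_F)=2$ through Corollary~\ref{sfb}, so one must verify that a general fibre of the Albanese map is ``general enough'' for the nef-reduction argument to give $n(L|_F)=\dim F$.
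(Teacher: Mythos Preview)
Your fibre analysis matches the paper's: reduce to the smooth case via Lemma~\ref{mor}, observe that $n(L)=\dim X$ forces $n(L|_F)=\dim F$ on a general fibre, and then establish bigness of $(K_X+\Delta+tL)|_F=K_F+\Delta_F+tL|_F$ for $t\gg 0$ by the trivial case $\dim F=0$, ampleness of $L|_F$ when $\dim F=1$, and Corollary~\ref{sfb} when $\dim F=2$ (using that $K_F+\Delta_F$ is pseudo-effective, hence $\kappa\ge 0$ by surface non-vanishing). Your convex-combination endgame for arbitrary $t\ge 0$ is also the same as in the paper.

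The divergence is in the transport step, and here you are making your life harder than necessary. The paper does not pass to a resolution $Z'$ of the image, invoke weak positivity, or chase an auxiliary pseudo-effective $N_t$ on $Z'$. It cites \cite[Theorem~4.1]{BC} (Birkar--Chen) as a black box: for any morphism $\alpha:X\to A$ to an abelian variety --- not assumed surjective, and fibres not assumed connected --- once $(K_X+\Delta+tL)|_F$ is big on general fibres, there exists a numerically trivial $\bQ$-divisor $D$ on $A$ with $\kappa(K_X+\Delta+tL+\alpha^*D)\ge 0$. This single citation replaces your entire third paragraph.

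Your proposed route is, in outline, how \cite{BC} is itself proved (positivity of direct images over abelian varieties together with generic-vanishing input), so it is not wrong in spirit; but the step you flag as the main obstacle is a real gap as written. The inequality ``$\kappa(X,K_X+\Delta+tL)\ge\dim F+\kappa(Z',N_t)$ for some pseudo-effective $N_t$'' is not a statement one can simply read off from Viehweg weak positivity, and --- as you note --- even granting it, $N_t$ is not the adjoint divisor of a klt pair on $Z'$, so the non-vanishing results available for varieties of maximal Albanese dimension do not apply to it directly. Filling this in amounts to reproving the relevant part of \cite{BC}. The fix is simply to cite \cite[Theorem~4.1]{BC} once relative bigness is established.
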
 
\begin{proof}
Let $f:X'\rightarrow (X,\Delta)$ be a log resolution of $(X,\Delta)$, and write $K_{X'}+\Delta'+E^-=f^*(K_X+\Delta)+E^+$ as before (here $\Delta'$ is the proper transform of $\Delta$ on $X'$). Let $\alpha':X'\rightarrow A(X')$ be the Albanese map of $X'$. Since $X$ has rational singularities, by \cite[Lemma 2.4.1]{BS} we have $A(X')=A(X)$ and $\alpha'=\alpha\circ f$. Assume our result holds for $(X', \Delta'+E^-)$ and $f^*L$, that is, $\kappa(K_{X'}+\Delta'+E^-+tf^*L+\alpha'^*D)\geq 0$ for $t\gg 0$. Since $K_{X'}+\Delta'+E^-+tf^*L+\alpha'^*D=K_{X'}+\Delta'+E^-+f^*(tL+\alpha^*D)$, then by Lemma \ref{mor}, $K_X+\Delta+tL+\alpha^*D$ also has non-negative Kodaira dimension. Therefore, we can replace $(X,\Delta)$ and $L$ by $(X', \Delta'+E^-)$ and $f^*L$, hence we may assume $(X,\Delta)$ is smooth. 

Let $F$ be a general non-empty fiber of $\alpha$. By the generic smoothness, we may assume every connected component of $F$ is smooth and irreducible. Also, by \cite[Theorem 4.1]{BC}, it suffices to show that $K_F+\Delta_F+tL_F$ is relatively big for $t\gg 0$, where $\Delta_F$ is defined by $(K_X+\Delta)|_F=K_F+\Delta_F$, and $L_F:=L|_F$. Note that if $\kappa(K_X+\Delta)\geq 1$, then $\alpha$ may not be surjective and possibly has disconnected fibers, but \cite[Theorem 4.1]{BC} does not require the morphism to be surjective or has connected fibers.

Since we assume $L$ is of maximal nef dimension, $L_F$ is also of maximal nef dimension on $F$, hence there is at least an irreducible component of $F$, say $Z$, satisfying that $\dim Z=\dim F$ and $L_Z:= L|_Z$ is still of maximal nef dimension. Also, we have $K_F|_Z=K_Z$ since $Z$ is a smooth connected component of $F$. Defining $\Delta_Z:=\Delta_F|_Z$, then we have $K_Z+\Delta_Z+tL_Z=(K_F+\Delta_F+tL_F)|_Z$.

Now, if $\dim Z=0$, then $\alpha$ is a generically finite morphism over its image, and hence any divisors are always relatively big. If $\dim Z=1$, then $Z$ is a smooth projective curve. Therefore, we have $K_Z+tL_Z$, and hence $K_Z+\Delta_Z+tL_Z$ is ample for $t\gg 0$, which implies $K_F+\Delta_F+tL_F$ is big. If $\dim Z=2$, then $Z$ is a smooth projective surface with $\kappa(K_Z+\Delta_Z)\geq 0$ since $K_X+\Delta$ is pseudoeffecitve. Thus, by Corollary \ref{sfb}, $K_Z+\Delta_Z+tL_Z$ is big for $t\gg 0$, which also implies the bigness of $K_X+\Delta_F+tL_F$. Hence we proved the bigness of $K_F+\Delta_F+tL_F$.
\end{proof}

\begin{corollary}\label{hps}
    Let $(X,\Delta)$ be a klt pair of normal projective variety such that $K_X+\Delta$ is pseudo-effective, and $L$ be a nef divisor on $X$ such that $n(L)=\dim X$. Assume one of the following conditions holds: 
    \begin{enumerate}
        \item $\dim X\leq 3$ and $q(X)>0$;
        \item $\kappa(K_X+\Delta)=0$ and $\dim X- q(X)\leq 2$.
    \end{enumerate}
    Then for every $t\geq 0$, $K_X+\Delta+tL$ is numerically $\bQ$-effective:
\end{corollary}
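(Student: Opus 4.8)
The plan is to reduce both cases to Proposition~\ref{alb} (and, in one degenerate sub-case, to Lemma~\ref{nd2}); the key observation is that $n(L)=\dim X$ says precisely that $L$ has maximal nef dimension, so $L$ meets the input requirements of Proposition~\ref{alb}. For case (1), I would first record that since $(X,\Delta)$ is klt with $K_X+\Delta$ pseudo-effective and $\dim X\le 3$, the nonvanishing theorem for klt pairs in dimension at most $3$ (a consequence of the abundance theorem for threefolds) gives $\kappa(K_X+\Delta)\ge 0$. Since $q(X)>0$, the Albanese variety $A(X)$ is positive dimensional, hence so is the image of the Albanese map $\alpha\colon X\to A(X)$ (it generates $A(X)$); therefore a general fibre $F$ of $\alpha$ satisfies $\dim F=\dim X-\dim\alpha(X)\le\dim X-1\le 2$. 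Proposition~\ref{alb} then applies, and because $\kappa(K_X+\Delta)\ge 0$ its final assertion yields that $K_X+\Delta+tL$ is numerically $\bQ$-effective for every $t\ge 0$.

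For case (2) I would distinguish according to $q(X)$. If $q(X)=0$, then $\dim X\le 2$, so $n(L)=\dim X\le 2$; for a nef reduction $f\colon X\dashrightarrow Y$ of $L$ we have $\dim Y=n(L)=\dim X$, hence $f$ is generically finite and its general fibre is a point, so Lemma~\ref{nd2} applies (its hypotheses $\dim F\le 3$ and $\kappa(F,K_F+\Delta_F)\ge 0$ being trivially satisfied), and since $\kappa(K_X+\Delta)=0\ge 0$ its final assertion gives the conclusion for every $t\ge 0$. If $q(X)>0$, the crucial input is that the hypothesis $\kappa(K_X+\Delta)=0$ forces the Albanese map $\alpha\colon X\to A(X)$ to be surjective — this is the klt-pair version of Kawamata's theorem on the Albanese map of varieties of Kodaira dimension zero (cf.\ \cite{K81}), which ultimately rests on subadditivity of Kodaira dimension over bases of maximal Albanese dimension (\cite{CP}, \cite{HPS}). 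Then $\dim\alpha(X)=q(X)$, so a general fibre of $\alpha$ has dimension $\dim X-q(X)\le 2$, Proposition~\ref{alb} applies, and, again since $\kappa(K_X+\Delta)=0\ge 0$, its final assertion yields that $K_X+\Delta+tL$ is numerically $\bQ$-effective for every $t\ge 0$.

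I expect the main obstacle to be the surjectivity of the Albanese map under the hypothesis $\kappa(K_X+\Delta)=0$ in case (2): this is the only genuinely nontrivial external ingredient, everything else being a bookkeeping reduction to Proposition~\ref{alb} or Lemma~\ref{nd2} together with the dimension count $\dim F\le 2$. A secondary point to check carefully is that the degenerate low-dimensional configurations ($q(X)=0$, $\dim X\le 2$, or $\alpha$ contracting $X$ to a point) are indeed all absorbed either by Lemma~\ref{nd2} or by the $\dim F\le 2$ hypothesis of Proposition~\ref{alb}, so that no case slips through; one should also confirm that the nonvanishing statement invoked in case (1) is available in the generality of klt pairs on threefolds, which it is.
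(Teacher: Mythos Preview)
Your proposal is correct and follows essentially the same route as the paper: case~(1) is reduced to Proposition~\ref{alb} via the nonvanishing theorem for klt threefolds, and case~(2) is reduced to Proposition~\ref{alb} once surjectivity of the Albanese map is established from the klt-pair version of the Cao--P\u{a}un/Hacon--Popa--Schnell subadditivity (the paper spells this out in Remark~\ref{hps2}). Your explicit treatment of the degenerate sub-case $q(X)=0$, $\dim X\le 2$ via Lemma~\ref{nd2} is a small refinement; the paper glosses over this boundary case, presumably because it is already covered by \cite[Corollary~C]{LP}.
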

\begin{proof}
    The first case immediately follows from Proposition \ref{alb} by the usual Nonvanishing Theorem in dimension 3. For the second case, we only need to show the Albanese map is surjective (If $X$ is smooth and $\Delta=0$, it is well-known that $\alpha$ is an algebraic fibre space by \cite{K81}). Let $\alpha:X\rightarrow A(X)$ be the Albanese morphism, $A_0$ be the normalization of $\Img\alpha$, and $X\rightarrow A'$ be the Stein factorization of $X\rightarrow A_0$ (this is a morphism since $X$ is normal). By \cite[Theorem 4.22]{CP} and the last remark in \cite[Section 1]{HPS}, we can prove the inequality in \cite[Theorem 1.1]{HPS} holds when $X$ replaced by a klt pair $(X,\Delta)$ (see Remark \ref{hps2}). This implies $\kappa(A')\leq 0$. On the other hand, $A'$ has maximal Albanese dimension, which implies $\kappa(A')\geq 0$. Hence $\kappa(A')=0$ and the image of $A'$ in $A$, which is just $\Img\alpha$, is an abelian subvariety of $A$. By the universal property of Albanese, we conclude that $A'=\Img\alpha=A(X)$ and hence $\alpha$ is an algebraic fibre space over $A(X)$, in particular, $\alpha$ is surjective.
\end{proof}

\begin{remark}\label{hps2}
As mentioned in \cite{HPS}, by using \cite[Theorem 4.22]{CP}, it is easy to show the inequality in \cite[Theorem 1.1]{HPS} holds for klt pairs by making a very little modification of the proof of the last part of \cite[Lemma 5.1]{HPS}. For the convenience of readers, we provide the details here: Let $(X,\Delta)\rightarrow Y$ be an algebraic fibre space between normal projective varieties with general fibre $F$ such that $(X,\Delta)$ is a klt pair of with $\kappa(K_X+\Delta)=0$ and $Y$ has maximal Albanese dimension. Let $\pi:X'\rightarrow X$ be a log resolution of $(X,\Delta)$, write $K_{X'}+\Delta'=\pi^*(K_X+\Delta)+E^+-E^-$ (where $\Delta'$ is the proper transform of $\Delta$ on $X'$). Replace $(X,\Delta)$ with $(X',\Delta'+E^-)$, and replace $F$ by $F'$ (here $F'$ is the general fibre of $X'\rightarrow Y$), we have $(X',\Delta'+E^-)$ is klt, the equality $$\kappa(X',K_{X'}+\Delta'+E^-)=\kappa (X', \pi^*(K_X+\Delta)+E^+)= \kappa(X,K_X+\Delta)$$ holds, and the inequality 
\begin{align*}
    \kappa(F', (K_{X'}+\Delta'+E^-)|_{F'})&= \kappa(F', (\pi^*(K_X+\Delta)+E^+)|_{F'})\\
    &\geq \kappa(F', (\pi^*(K_X+\Delta))|_{F'})= \kappa(F, (K_X+\Delta)|_{F})
\end{align*}
also holds. Thus, if the inequality in \cite[Theorem 1.1]{HPS} holds for $(X',\Delta'+E^-)\rightarrow Y$, then so does $(X,\Delta)\rightarrow Y$, hence we may assume the pair $(X,\Delta)$ is smooth. 

Let $Y_0:=\Img (Y\rightarrow A(Y))$ and $Y\rightarrow Y'$ be the Stein factorization of $Y\rightarrow Y_0$. Since $Y\rightarrow Y_0$ is generically finite, we have $Y\rightarrow Y'$ is birational,  $Y'$ is normal, $X\rightarrow Y'$ is an algebraic fibre space, and $Y'\rightarrow A(Y)$ is finite over its image (which is just $Y_0$). By \cite[Theorem 13]{K81}, there is an \'etale covering $Y''\rightarrow Y'$ such that $Y''\cong Z\times K$, where $Z$ is of general type with $\kappa(Z)=\kappa(Y')=\kappa(Y)$ and $K$ is an abelian variety. Let $X'':= X\times_{Y'} Y''$, since $Y''\rightarrow Y'$ is \'etale, so does the induced may $g:X''\rightarrow X$. Hence $X''$ is smooth since $X$ is. Let $\Delta'':=g^*(\Delta)$, then we have $K_{X''}+\Delta''=g^*(K_X+\Delta)$ and $(X'',\Delta'')$ is klt since $g$ is \'etale. Consider the morphism $h:(X'',\Delta'')\rightarrow Z$ induced by $X''\rightarrow Y''=K\times Z$, and let $G$ be a general fibre of $h$. Then we have the induced morphism $G\rightarrow K$, which has general fibre $F$. Since $(G, \Delta''|_G)$ is klt, by \cite[Theorem 4.22]{CP}, we have $$\kappa(G, (K_{X''}+\Delta'')|_G)=\kappa(G, K_G+\Delta''|_G)\geq \kappa(F,(K_{X''}+\Delta'')|_F)= \kappa(F,(K_{X}+\Delta)|_F).$$ Since $Z$ is of general type, by \cite[Theorem 1.7]{F17} (or \cite[Section 4]{Ca}), we have $$\kappa(X'', K_{X''}+\Delta'')\geq\kappa(G, (K_{X''}+\Delta'')|_G)+\dim Z \geq\kappa(F,(K_{X}+\Delta)|_F)+\dim Z. $$
Since $\kappa(X'', K_{X''}+\Delta'')= \kappa (X, K_{X}+\Delta)$ and $\dim Z=\kappa (Y')=\kappa (Y)$, the proof is completed.
\end{remark}

\begin{proof}[Proof of Theorem \ref{nvc}]
The four cases in Theorem \ref{nvc} immediately follow from Corollary \ref{nd2c} and Corollary \ref{hps}.
\end{proof}

\section{Application on Iitaka Conjecture}

At first, we prove Theorem \ref{basenvt}. The argument of our proof almost follows the idea of Birkar's proof in \cite[Theorem 1.3]{B}, with a little modification by using the Generalized Nonvanishing Conjecture.

\begin{proof}[Proof of Theorem \ref{basenvt}]
Since this statement is trivial if $\kappa(Y)=-\infty$, we may assume $\kappa(Y)\geq 0$. By \cite[Theorem 4.5]{FM}, there is a commutative diagram 
\begin{center}
    \begin{tikzcd}
&X'\arrow[r,"\pi"]\arrow[d,"f'"] &X\arrow[d,"f"]\\
&Y'\arrow[r,"\mu"] &Y
\end{tikzcd}
\end{center}
such that $X'$ and $Y'$ are smooth, $\pi$ and $\mu$ are birational morphism, and $f'$ is an algebraic fibre space with SNC branch. Let $p>1$ be a positive integer such that $f'_*\cO_{X'}(pK_{X'})\neq 0,$ then there exists an effective $\bQ$-divisor $B$ and a nef $\bQ$-divisor $L$ on $Y'$ satisfies the followings:
\begin{enumerate}
    \item $pK_{X'}=pf'^*(K_{Y'}+B+L)+R$ for some $\bQ$-divisor $R$ on $X'$.
    \item If we write $R=R^+-R^-$ with $R^+,R^-$ both effective and without common components, then $R^-$ is both $\pi$-exceptional and $f'$-exceptional. 
\end{enumerate}
Since we assume $N_{d,k,q}$ holds for $d=\dim Y=\dim Y'$, $k=\kappa(Y)=\kappa(Y')$, and $q=q(Y)=q(Y')$, for any non-negative rational number $j$, $K_{Y'}+jL$ lies in the effective cone ${\rm Eff}(Y')$. In particular, since $B$ is effective, for any $j\geq 0$, there is an effective $\bQ$-divisor $D_j$ such that $D_j\equiv  K_{Y'}+B+jL=:L_j.$

Let $i,j\gg 0$ be sufficiently divisible integers, and $M:= \pi_*f'^*(D_j-L_j)\equiv 0$. Since $R^-$ is $\pi$-exceptional, there exist some effective $\pi$-exceptional divisors $E^+,E^-$ such that $$i\pi^*(jpK_{X}+pM)+E^+-E^-=i(jpK_{X'}+jR^-+pf'^*(D_j-L_j)).$$ Thus, we have
\begin{align*}
    H^0(i(jpK_X+pM))&= H^0(i\pi^*(jpK_X+pM))\\ 
    &= H^0(i\pi^*(jpK_X+pM)+E^+)\\
    &\geq H^0(i\pi^*(jpK_X+pM)+E^+-E^-)\\ 
    &=H^0(i(jpK_{X'}+jR^-+pf'^*(D_j-L_j)))
\end{align*}
Moreover, we have
\begin{align*}
    jpK_{X'}+jR^-+pf'^*(D_j-L_j)&=jpf'^*(K_{Y'}+B+L)+jR^++pf'^*(D_j-L_j)\\
    &=jpf'^*(K_{Y'}+B+L)+jR^++pf'^*(D_j-K_{Y'}-B-jL)\\
    &=(j-1)pf'^*(K_{Y'}+B)+jR^++pf'^*(D_j)\\
    &\geq (j-1)pf'^*(K_{Y'}).
\end{align*}
Here, the first equality follows from the canonical bundle formula of Fujino-Mori, the second equality follows from the definition of $L_j$, and the last inequality holds because all of $B, R^-$, and $D_j$ are effective. By this inequality, for any sufficiently divisible positive integer $i$, we have $$H^0(i(jpK_{X'}+jR^-+pf'^*(D_j-L_j)))\geq H^0((j-1)pf'^*(K_{Y'})) =H^0((j-1)pK_{Y'}).$$ By \cite[Theorem 3.1]{CPT}, we have $$H^0(ijpK_X)\geq H^0(ijpK_X+ipM)\geq H^0(i(jpK_{X'}+jR^-+pf'^*(D_j-L_j))) \geq H^0(i(j-1)pK_{Y'}).$$ This implies $$\kappa(X)=\kappa(X,jpK_X)\geq \kappa(Y,(j-1)pK_Y)=\kappa(Y),$$ which completes the proof.
\end{proof}

\begin{remark}
The above proof also shows that even if we only have $K_Y+B+jL$ is numerically $\bQ$-effective for some fixed $j\in \bQ_{>1}$, the result of the theorem still holds.
\end{remark}

Next, before we prove Theorem \ref{iitaka7}, we give the definition of the augmented irregularity:
\begin{definition}\label{aug}
    Let $X$ be a normal projective variety, the augmented irregularity of $X$, denoted by $\tilde{q}(X)$, is defined by 
    \begin{center}
    $\tilde{q}(X):={\rm max}\lbrace q(X')|\ X'\rightarrow X $  is quasi \'etale $\rbrace$.
    \end{center}
\end{definition}

Now, we have prepared all the ingredients for proving Theorem \ref{iitaka7}. 

\begin{proof}[Proof of theorem \ref{iitaka7}]
We may assume $\kappa(F)\geq 0$ and $\kappa(Y)\geq 0$ (otherwise, this statement is trivial). By \cite{C} and \cite{K82} (or \cite{CP} and \cite{HPS}), $C_{n,m}$ holds if $m\leq 2$. By \cite{K85}, $C_{n,m}$ holds if $F$ has a good minimal model. In particular, by \cite[Theorems 4.4 and 4.5]{L}, \cite[Theorem 1]{K81}, and the existence of good minimal model in dimension 3, $C_{n,m}$ holds if either $\dim F-\kappa(F)\leq 3$, or in the case $\kappa(F)=0$ and $\dim F- q(F)\leq 3$. This implies $C_{n,m}$ holds if either $n-m\leq 3$, or $n-m=4$ but eithre $\kappa(F)\geq 1$ or $\kappa(F)=0$ with $q(F)\geq 1$. Therefore, when $n\leq 7$, $C_{n,m}$ is confirmed except for the case $n=7, m=3,$ and $\kappa(F)=q(F)=0$.

Now, by Corollary \ref{nvc3d} and Theorem \ref{basenvt}, if either $\kappa(Y)>0$ or $q(Y)>0$, then we have $\kappa(X)\geq \kappa(Y)$, hence we are done. If $\kappa(Y)=q(Y)=0$ but $\tilde{q}(Y)>0$, then there exists a quasi-\'etale covering $Y'\rightarrow Y$ (hence \'etale by the Nagata-Zariski purity theorem and the smoothness of $Y$) with $q(Y')=\tilde{q}(Y)>0.$ Let $X':= X\times_{Y} Y'$, then $X'\rightarrow X$ is also \'etale (hence $X'$ is smooth), and the morphism $X'\rightarrow Y'$ induced by the base change is an algebraic fiber space between smooth projective varieties with general fiber $F$. Since the Kodaira dimension is invariant under \'etale covering, by Theorem \ref{basenvt} again, we have the inequality $$\kappa(X)=\kappa(X')\geq \kappa(F)+\kappa(Y')=\kappa(F)+\kappa(Y).$$ Finally, if $q(F)=q(Y)=\tilde{q}(Y)=0$, then by \cite[Theorem 1.6]{F05}, $q(X)=0$. This completes the proof.
\end{proof}

\begin{remark}\label{aug2}
    If $N_{d,k,q,n}$ is true, then we can show $K_X+\Delta+tL$ is numerically $\bQ$-effective for klt pair $(X,\Delta)$ with $\dim X=d$, $\kappa(K_X+\Delta)=k$, $n(L)=n$, and $\tilde{q}(X)=q$ in the following way: By Lemma \ref{mor}, we may assume $(X,\Delta)$ is smooth. Let $\pi:(X',\Delta')\rightarrow (X, \Delta)$ be a quasi-\'etale covering (which is \'etale by the Nagata-Zariski purity theorem and the smoothness of $X$) with $q(X')=\tilde{q}(X)$. Then $(X',\Delta')$ is klt with $K_{X'}+\Delta'=\pi^*(K_X+\Delta)$, hence $\kappa(K_{X'}+\Delta')=\kappa(K_X+\Delta)$. By our assumption, $K_{X'}+\Delta'+t\pi^*L=\pi^*(K_X+\Delta+tL)$ is numerically equivalent to an effective divisor $D$. Now, follows the argument in \cite[Corllary 4.2]{WZ}, since $K_X+\Delta+tL=\frac{1}{\deg \pi}\pi_*(K_{X'}+\Delta'+t\pi^*L)$ is weak numerically equivalent $\frac{1}{\deg \pi}\pi_*D$, by the smoothness of $X$, they are numerically equivalent. Hence we have $$K_X+\Delta+tL=\frac{1}{\deg \pi}\pi_*(K_{X'}+\Delta'+t\pi^*L)\equiv \frac{1}{\deg \pi}\pi_*D\geq 0.$$
\end{remark}

\begin{remark}
    For the case $\kappa(Y)=q(Y)=0$ but $\tilde{q}(Y)>0$ in the above proof, since for smooth projective varieties, the augmented irregularity is also a birational invariant, one can use Remark \ref{aug2} to show the Generalize Nonvanishing Conjecture holds for threefold with $\tilde{q}>0$, and apply Theorem \ref{basenvt} to get our result.
\end{remark}

\begin{remark}
The remaining unknown case of $C_{7,m}$ satisfies the following conditions:
\begin{enumerate}
    \item $m=3$;
    \item $\kappa(F)=\kappa(Y)=q(X)=q(F)=\tilde{q}(Y)=0$;
    \item In the decomposition $pK_{X'}=pf'^*(K_{Y'}+B+L)+R$ in the Fujino-Mori's canonical bundle formula, $\kappa(Y', K_{Y'}+B)=\nu(Y', K_{Y'}+B)= 0$ and $n(L)= 3$.
\end{enumerate}
If one can prove $\kappa(X')\geq 0$ under the above assumptions, then the proof of $C_{7,m}$ is completed.
\end{remark}

\end{document}